\theoremstyle{plain}
\newtheorem{theorem}{Theorem}[section]
\newtheorem*{theorem*}{Theorem}
\newtheorem{lemma}[theorem]{Lemma}
\newtheorem{proposition}[theorem]{Proposition}
\newtheorem*{proposition*}{Proposition}
\newtheorem*{conjecture*}{Conjecture}
\theoremstyle{definition}
\newtheorem{definition}[theorem]{Definition}
\theoremstyle{remark}
\DeclareMathOperator{\supp}{supp}
\newcommand{\abs}[1]{\left\lvert #1 \right\rvert}
\newcommand{\norm}[1]{\left\lVert #1 \right\rVert}
\newcommand{\R}{\mathbb{R}}
\newcommand{\C}{\mathbb{C}}
\newcommand{\Z}{\mathbb{Z}}
\newcommand{\N}{\mathbb{N}}
\newcommand{\DOI}{B} 
\newcommand{\qnorm}[1]{{\norm{ #1 }_{C^7}}}
\newcommand{\qspace}{{C^7_c(\DOI)}}
\newcommand{\qspacen}[1]{{C^{#1}_c(\DOI)}}
\newcommand{\qsmoothness}{7}
\title{Well-posedness of the Goursat problem and stability for point source inverse backscattering}
\author{Eemeli Bl{\aa}sten\thanks{HKUST Jockey Club Institute for
    Advanced Study, Hong Kong University of Science and Technology,
    Hong Kong SAR. Email: \texttt{iaseemeli@ust.hk}}}
\begin{document}
\maketitle

\begin{abstract}
  We show logarithmic stability for the point source inverse
  backscattering problem under the assumption of angularly controlled
  potentials. Radial symmetry implies H\"older stability. Importantly,
  we also show that the point source equation is well-posed and also
  that the associated characteristic initial value problem, or Goursat
  problem, is well-posed. These latter results are difficult to find
  in the literature in the form required by the stability proof.
\end{abstract}

\medskip
{\flushleft
MSC classes: 35R30, 78A46, 35A08, 35L15

Keywords: inverse backscattering, point source, Goursat problem, stability}

\section{Introduction}
For a potential function $q$ supported inside the unit disc $B$ in
$\R^3$ and a point $a$ consider the point source problem
\begin{alignat}{2}
  (\partial_t^2 - \Delta - q)U^a(x,t) & = \delta(x-a,t), &\qquad&
  x\in\R^3, t\in\R, \label{EQ1}\\ U^a(x,t) & = 0, &\qquad& x\in\R^3,
  t<0. \label{EQ2}
\end{alignat}
We define the point source backscattering data as the function
$(a,t)\mapsto U^a(a,t)$. This paper has two goals: to prove the
well-posedness of \eqref{EQ1}--\eqref{EQ2}, and then to solve the
inverse problem of determining $q$ from the point source
backscattering data $U^a(a,t)$ with $a\in\partial B$ and $t>0$.

The ordinary inverse problem of backscattering for arbitrary
potentials is a major open problem. In it the scattering amplitude
$A(\hat x, \theta, k)$ is measured for frequencies $k\in\R_+$,
incident plane-wave directions $\abs{\theta}=1$, and measurement
direction $\hat x=-\theta$. The question is whether such data
corresponds to a unique potential $q$. This question has been solved
in the time-domain for an admissible class of potentials in
\cite{RU1}. For a more in-depth review of earlier results please refer
to \cite{MU}.

Traditional backscattering applications include radar, fault detection
in fiber optics, Rutherford backscattering and X-ray backscattering
(e.g. full-body scanners) among others. What's common to all of these
is that the measured object (or fault) is located far away from the
wave source. From the point of view of the Rakesh-Uhlmann
\cite{RU1,RU2} techniques the classical backscattering problem in the
time-domain behaves as the point source problem with source at
infinity. This means that the problem \eqref{EQ1}--\eqref{EQ2} models
a situation where the wave source is close to the object under
investigation, for example in the order of a few wavelengths. Therefor
our results imply that backscattering experiments would give useful
information even when the object is close. For example one could
imagine using the backscattering of sound, radio or elastic waves to
find faults in an object of human scale.

Uniqueness for the inverse backscattering problem related to
\eqref{EQ1}--\eqref{EQ2} was shown by Rakesh and Uhlmann for an
admissible class of smooth potentials in \cite{RU2}. We shall show
stability for their method. In addition we will show that the direct
problem is well-posed in the sense of Hadamard, including all the
required norm estimates.

The question of well-posedness of the direct problem would seem
well-known to the experts at first sight. However this result is very
difficult to find in the literature for non-smooth potentials and with
explicit norm estimates. We hope that future research on the topic
finds the explicit proof convenient.

\medskip
The main motivation for this paper is the proof of the following
stability theorem. As in \cite{RU1,RU2} it applies to a class of
potentials whose differences are \emph{angularly controlled}.

\begin{theorem} \label{inverseThm}
  Let $\DOI = B(\bar0,1) \subset \R^3$ and fix positive a-priori
  parameters $S,\mathcal M < \infty$ and $h<1$. Then there are
  $\mathfrak C, \mathfrak D < \infty$ with the following properties:

  Let $q_1, q_2 \in \qspace$ with norm bounds $\qnorm{q_j} \leq
  \mathcal M$. Assume moreover that $\supp q_1$ and $\supp q_2$ are no
  closer than distance $h$ from $\partial\DOI$. If $q_1-q_2$ is
  angularly controlled with constant $S$, i.e.
  \begin{equation} \label{angularControl}
    \sum_{i<j} \int_{\abs{x}=r} \abs{\Omega_{ij}(q_1-q_2)(x)}^2
    d\sigma(x) \leq S^2 \int_{\abs{x}=r} \abs{(q_1-q_2)(x)}^2
    d\sigma(x)
  \end{equation}
  for any $0<r<1$ where $\Omega_{ij} = x_i\partial_j - x_j\partial_i$
  are the angular derivatives, then we have the following conditional
  stability estimate
  \begin{equation} \label{condStab}
    \norm{q_1-q_2}_{L^2(\{\abs{x}=r\})} \leq e^{\mathfrak C / r^4}
    \norm{U_1^a-U_2^a}
  \end{equation}
  for any given positive $r$. Here $U^a_1$ and $U^a_2$ are the unique
  solutions to the problem \eqref{EQ1}--\eqref{EQ2} given by Theorem
  \ref{directProblemOK} with $a\in\partial\DOI$, $q=q_1$, $q=q_2$, and
  \[
  \norm{U_1^a-U_2^a}^2 = \sup_{0<\tau<1} \int_{\abs{a}=1} \abs{
    \partial_\tau \big( \tau (U^a_1-U^a_2)(a,2\tau) \big) }^2
  d\sigma(a)
  \]
  is the backscattering measurement norm that we impose.

  A fortiori we get the logarithmic full-domain estimate
  \begin{equation} \label{condStabOrigin}
    \norm{q_1-q_2}_{L^2(\DOI)} \leq \mathfrak D \left(\ln
    \frac{1}{\norm{U_1^a-U_2^a}}\right)^{-1/4}
  \end{equation}
  when $\norm{U_1^a-U_2^a} < e^{-1}$ and $\norm{q_1-q_2}_{L^2(B)} \leq
  \mathfrak D \norm{U_1^a-U_2^a}$ otherwise.

  If instead of angular control for $q_1-q_2$ we assume the stronger
  condition of radial symmetry, we have
  \[
  \norm{q_1-q_2}_{L^2(\{\abs{x}=r\})} \leq \mathfrak C r^\alpha
  \norm{U_1^a-U_2^a}
  \]
  where $\alpha=\alpha(\mathcal M,h,\DOI)$, and this implies the full
  domain H\"older estimate
  \[
  \norm{q_1-q_2}_{L^2(\DOI)} \leq \mathfrak D
  \norm{U_1^a-U_2^a}^{\frac{1}{1+\alpha}}.
  \]
\end{theorem}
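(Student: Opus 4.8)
The plan is to render the Rakesh--Uhlmann uniqueness argument quantitative. The first step is to derive a \emph{backscattering integral equation}. Using Theorem~\ref{directProblemOK} together with the well-posedness of the associated Goursat problem, one puts $U^a$ into characteristic coordinates adapted to the backscattering cone; a progressing-wave expansion, legitimate for $q\in\qspace$ by those two results, then identifies the measured profile $\d_\tau\bigl(\tau U^a(a,2\tau)\bigr)$ with an explicit $q$-independent singular term (the tip of the free point source cone) plus a regular part $\mathcal R^a_q$ obeying a nonlinear scattering relation whose leading contribution is a spherical integral $\int_{\abs{y-a}=\tau}q(y)\,J^a_q(y)\,d\sigma(y)$, with $J^a_q>0$ bounded above and below in terms of $\mathcal M$ and $h$, the rest being quadratic and of higher order in $q$. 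Applying this for $q_1$ and for $q_2$ and subtracting --- the singular parts cancel, so the difference of regular parts is exactly $\d_\tau\bigl(\tau(U^a_1-U^a_2)(a,2\tau)\bigr)$ --- and moving the difference $J^a_{q_1}-J^a_{q_2}$ and the higher-order terms into a remainder, all of which by causality involve $q_1-q_2$ only at points lying causally before the sphere $\{\abs{y-a}=\tau\}$, I obtain
\[
c\int_{\abs{y-a}=\tau}(q_1-q_2)(y)\,J^a_{q_1}(y)\,d\sigma(y)=-\d_\tau\bigl(\tau(U^a_1-U^a_2)(a,2\tau)\bigr)+V^a(\tau),\qquad a\in\d\DOI,\ 0<\tau<1,
\]
with $V^a(\tau)$ of Volterra type in $q_1-q_2$.

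The heart of the proof is the controlled inversion of this equation. The essential geometric point is that, since $\tau$ ranges only over $(0,1)$, the spheres $\{\abs{y-a}=\tau\}$ with $a\in\d\DOI$ reach a point $y$ with $\abs{y}=\rho$ only from the cap of centres $\{a':\langle\widehat{y},a'\rangle>\rho/2\}$ and never at distance $\ge 1$; this truncation, absent from classical spherical-mean inversion with unrestricted data, is precisely the source of instability and it degenerates as $\rho\to 0$. Squaring the backscattering equation, integrating over $a\in\d\DOI$, and using that $\int_{\abs{a}=1}\abs{\d_\tau(\tau(U^a_1-U^a_2)(a,2\tau))}^2\,d\sigma(a)\le\norm{U^a_1-U^a_2}^2$ for every $\tau$, I would recast the whole family as a single radial estimate for $N(\rho):=\norm{q_1-q_2}_{L^2(\{\abs{x}=\rho\})}$. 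Here the angular control hypothesis \eqref{angularControl} is indispensable: without it the inverted equation would involve the tangential gradient $\nabla_{S^2}(q_1-q_2)$, which cannot be dominated by $N$, whereas \eqref{angularControl} states precisely that the angular-derivative energy of $q_1-q_2$ on $\{\abs{x}=\rho\}$ is $\le S^2 N(\rho)^2$, so that the truncated spherical transform becomes boundedly invertible on potentials satisfying \eqref{angularControl}, with constant depending on $S$. The outcome is a Volterra inequality
\[
N(\rho)^2\le G(\rho)\,\norm{U^a_1-U^a_2}^2+\int_\rho^{1-h}K(\rho,\rho')\,N(\rho')^2\,d\rho',\qquad 0<\rho<1-h,
\]
with $N\equiv 0$ on $(1-h,1)$, in which $G(\rho)$ and $K(\rho,\cdot)$ are bounded by a fixed negative power of $\rho$ near the origin. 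Grönwall's inequality, run from $\rho=1-h$ inward, then yields $N(\rho)^2\le e^{\mathfrak C/\rho^4}\norm{U^a_1-U^a_2}^2$, which is \eqref{condStab}. I expect the main obstacle to be exactly this step: extracting a clean radial Volterra inequality from the limited-angle spherical geometry, tracking the precise power of $\rho$ in $G$ and $K$ (hence the exponent $4$), and verifying that \eqref{angularControl} suffices to close it with constants depending only on $\mathcal M$, $S$, $h$ and $\DOI$ --- this is where the detailed form of the Rakesh--Uhlmann identities must be used.

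The remaining assertions follow by routine interpolation. For \eqref{condStabOrigin} one splits $\norm{q_1-q_2}_{L^2(\DOI)}^2=\int_0^1 N(\rho)^2\,d\rho$ at a cut-off $r_0$, bounds $N$ on $(0,r_0)$ by the a~priori estimate $N(\rho)\lesssim\mathcal M\rho$ coming from $\qspace\hookrightarrow L^\infty$ and on $(r_0,1)$ by $e^{\mathfrak C/\rho^4}\norm{U^a_1-U^a_2}$, and optimises over $r_0$; this forces $r_0\sim(\ln\norm{U^a_1-U^a_2}^{-1})^{-1/4}$ and requires $\norm{U^a_1-U^a_2}<e^{-1}$, the complementary case being the stated a~priori bound. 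Under radial symmetry, \eqref{angularControl} holds with $S=0$ and the angular machinery collapses: $q_1-q_2$ carries only the zeroth spherical mode, so the truncated spherical transform reduces to a one-dimensional Abel-type Volterra transform in the radial variable --- and here the restriction $\tau<1$ loses no information, since as $\tau\uparrow 1$ the spheres $\abs{y-a}=\tau$ still sweep every radius $\rho\in(0,1-h]$ --- invertible at the cost of one differentiation and a power of $\rho$. The radial Grönwall then has a polynomially, rather than exponentially, bounded kernel, producing $\norm{q_1-q_2}_{L^2(\{\abs{x}=r\})}\le\mathfrak C r^\alpha\norm{U^a_1-U^a_2}$ with $\alpha=\alpha(\mathcal M,h,\DOI)$, and interpolating this against the a~priori bound exactly as above gives the full-domain Hölder estimate with exponent $\tfrac{1}{1+\alpha}$.
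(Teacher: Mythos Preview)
Your high-level architecture---integral identity, angular control, radial Volterra--Gr\"onwall, then interpolation---is the paper's. But your account of the central step is off in a way that would stall the argument if you tried to execute it as written.

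First, two small corrections: there is no singular term in $U^a_j(a,2\tau)$ for $\tau>0$ (the free Green's function lives only on $t=\abs{x-a}$, which does not meet $(a,2\tau)$), and the boundary identity (Proposition~\ref{prop:bndry2inside}) carries a constant weight on the sphere, not a $q$-dependent $J^a_q$; with $\tilde q=q_1-q_2$ it reads $\tau\tilde U^a(a,2\tau)=\frac{1}{32\pi^2\tau}\int_{\abs{x-a}=\tau}\tilde q\,d\sigma+\int_{\abs{x-a}\le\tau}\tilde q\,\tau k\,dx$. The substantive gap is that the paper never ``inverts the truncated spherical transform'' in your sense. The mechanism that replaces this is Proposition~\ref{prop:differentiate}: differentiating the spherical mean in $\tau$ produces directly the \emph{pointwise} value
\[
\partial_\tau\Big(\frac{1}{4\pi\tau}\int_{\abs{x-a}=\tau}\tilde q\,d\sigma\Big)=\frac{1-\tau}{2}\,\tilde q\big((1-\tau)a\big)+E(a,\tau),
\]
where $\abs{E(a,\tau)}^2$ is bounded by a weighted integral of $\sum_{i<j}\abs{\Omega_{ij}\tilde q}^2$ over $\abs{x-a}=\tau$. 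Angular control enters only here, to dominate $\int_{\abs{a}=1}\abs{E}^2\,d\sigma(a)$ by integrals of $\mathcal Q(r)=\int_{\abs{x}=r}\abs{\tilde q}^2\,d\sigma$ over $r>1-\tau$. After squaring, integrating over $a\in\partial\DOI$, and using Lemma~\ref{integrationChangeOfCoords}, one obtains the Volterra inequality for $\varphi(\tau)=\mathcal Q(1-\tau)$ with kernel $\beta(\tau,s)\lesssim(1-\tau)^{-2}(\tau-s)^{-1/2}$. The exponent $4$ in $e^{\mathfrak C/r^4}$ is not a generic ``negative power of $\rho$'': it comes from the square-root Gr\"onwall bootstrap (Lemma~\ref{gronwallLemma}), which iterates once and thereby squares the $(1-\tau)^{-2}$ prefactor to $(1-\tau)^{-4}$. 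Your descriptions of the full-domain logarithmic estimate and of the radial (H\"older) case are accurate and match the paper.
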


The proof of the above theorem is presented in Section
\ref{inverseSect} and is based on the innovative techniques from
\cite{RU2}. It starts with writing the data
$U_1^a(a,2\tau)-U_2^a(a,2\tau)$ as an integral involving $q_1-q_2$ and
solutions to \eqref{EQ1}--\eqref{EQ2}. The linear part of this
integral is the average of $q_1-q_2$ over spheres with centers on
$\partial B$. Proposition \ref{prop:differentiate} is key for
inverting the linearised problem and its perturbations. The inversion
formula to this, and to the corresponding linearized problem in
plane-wave inverse backscattering --- which is the Radon transform ---
is an ill-posed operator. Angular control and Gr\"onwall's inequality
give uniqueness and logarithmic stability to the linearized problem,
and also to the full nonlinear inverse problem.

From the point of view of applications the logarithmic stability seems
unpleasant. If we knew in advance that $q_1=q_2$ in a fixed
neighbourhood of the origin, then \eqref{condStab} would give us a
Lipschitz stability estimate $\norm{q_1-q_2}_{L^2(B)} \leq C
\norm{U_1^a-U_2^a}$. However it is not clear under which conditions
$q_1-q_2$ would stay angularly controlled if the origin was moved to
another location, e.g. outside of their supports. The method of this
paper and \cite{RU1,RU2} is centered around angular control so further
work should focus on understanding this condition. When the integrals
that use this condition are ignored, as happens when $q_1-q_2$ is
radially symmetric, we get H\"older stability.

It would be extremely surprising if H\"older stability was possible in
general. The fixed frequency multi-static inverse problem is known to
be exponentially ill-posed \cite{Mandache}. Counting dimensions, this
problem is overdetermined in $\R^3$ while the harder backscattering
problem is determined. However no formal inference can be made since
there is no known direct way of deducing the multi frequency (or
time-domain) backscattering data from the fixed frequency multi-static
data. Furter comments on this complex issue deserve a completely new
study.

\medskip Showing the well-posedness of the direct problem
\eqref{EQ1}--\eqref{EQ2} is a major effort. This has to be done for
two reasons. Firstly because the proof of Theorem \ref{inverseThm}
requires norm-estimates related to the solution $U^a$. These estimates
are lacking from the literature. Secondly, it makes sure that the
backscattering data $U^a(a,t)$ is smooth enough for the above theorem
to say anything meaningful.

\begin{theorem} \label{directProblemOK}
  Let $n\geq\qsmoothness$ and $\DOI=B(\bar0,1)$ be the unit disc in
  $\R^3$. Let $q\in\qspacen{n}$ and $a\in\partial\DOI$. Then the
  point source problem \eqref{EQ1}--\eqref{EQ2} has a unique solution
  $U^a$ in the set of distributions of order $n$. It is given by
  \begin{equation} \label{ansatz}
    U^a(x,t) = \frac{\delta(t-\abs{x-a})}{4\pi\abs{x-a}} +
    H(t-\abs{x-a}) r^a(x,t)
  \end{equation}
  where $r^a\in C^1(\R^3\times\R)$ and $\delta, H$ are the Dirac-delta
  distribution and Heaviside function on $\R$. For any $T>0$ and
  $\mathcal M \geq \qnorm{q}$ it has the norm estimate
  \begin{equation} \label{2ndTermEstimates}
    \norm{r^a}_{C^1(\R^3\times{[{0,T}]})} \leq C_{T,\mathcal M}.
  \end{equation}
  
  Moreover $U^a$ is $C^1$-smooth outside the light cone
  $t=\abs{x-a}$. In particular the map $(a,\tau) \mapsto U^a(a,2\tau)$
  is well-defined $\partial\DOI\times{({0,1})}\to\C$ and continuously
  differentiable in $\tau$. Furthermore
  \[
  \sup_{a\in\partial\DOI}\sup_{0<\tau<1}
  \abs{\partial_\tau^\beta(U_1^a-U_2^a)(a,2\tau)} \leq C_{\mathcal M}
  \qnorm{q_1-q_2}
  \]
  for solutions $U^a_j$ arising from two potentials $q_j$, $j=1,2$ and
  for any $\beta\in\{0,1\}$.
\end{theorem}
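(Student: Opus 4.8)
The plan is to insert the progressing-wave ansatz \eqref{ansatz} into \eqref{EQ1}, reduce the construction of $U^a$ to a characteristic (Goursat) problem for the remainder $r^a$, and then deduce every assertion of Theorem \ref{directProblemOK} from the quantitative well-posedness of that Goursat problem. Abbreviate $\rho=\abs{x-a}$, $\omega=(x-a)/\rho$, $s=t-\rho$, and recall that $E_0^a:=\delta(s)/(4\pi\rho)$ is the forward fundamental solution of $\d_t^2-\Delta$ with pole at $(a,0)$, so $(\d_t^2-\Delta)E_0^a=\delta(x-a,t)$ and $E_0^a\equiv0$ for $t<0$. Substituting $U^a=E_0^a+H(s)r^a$ turns \eqref{EQ1} into $(\d_t^2-\Delta-q)(H(s)r^a)=qE_0^a$. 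Expanding the left side with the distributional chain rule, using $\abs{\nabla_x s}=1$ and $\Delta_x s=-2/\rho$, the $\delta'(s)$ terms cancel identically; equating the $\delta(s)$ coefficients yields the transport identity $2\big(\d_t+\omega\cdot\nabla_x+\rho^{-1}\big)r^a=q/(4\pi\rho)$ on $\{s=0\}$, while equating the $H(s)$ coefficients forces $(\d_t^2-\Delta-q)r^a=0$ in the open forward cone $\{t>\rho\}$. Since $\rho\,\d_t r^a+\rho\,\omega\cdot\nabla_x r^a+r^a=(\d_t+\omega\cdot\nabla_x)(\rho r^a)$ and $\d_t+\omega\cdot\nabla_x$ is the ray derivative along $t\mapsto(a+t\omega,t)$, integrating the transport equation outward from the vertex (where $\rho r^a\to 0$) produces the characteristic data
\[ r^a\big|_{t=\rho}=g^a(x):=\frac1{8\pi}\int_0^1 q\big(a+\sigma(x-a)\big)\,d\sigma , \]
a $C^n$ function of $x\in\R^3$ with $\norm{g^a}_{C^k}\lesssim\norm{q}_{C^k}$ and $\norm{g_1^a-g_2^a}_{C^k}\lesssim\norm{q_1-q_2}_{C^k}$ uniformly in $a\in\partial\DOI$ for every $k\leq n$.

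Thus an ansatz of the form \eqref{ansatz} solves \eqref{EQ1}--\eqref{EQ2} precisely when $r^a$ solves the Goursat problem $(\d_t^2-\Delta-q)r^a=0$ in $\{t>\rho\}$ with $r^a=g^a$ on $\{t=\rho\}$. By the well-posedness of the Goursat problem established in the paper --- e.g. via a Volterra integral equation over the intersection of the forward cone with the backward light cone of an interior point, solved by Picard iteration, which also supplies the quantitative bounds --- this problem has a unique solution, and for every $T>0$
\[ \norm{r^a}_{C^1(\{\rho\leq t\leq T\})}\leq C_{T,\mathcal M}\,\norm{g^a}_{C^k}\leq C_{T,\mathcal M}, \]
where $k\leq n$ is the number of derivatives the Goursat theorem asks of its data; the hypothesis $n\geq\qsmoothness$ is exactly what makes this derivative loss affordable, and the bound is uniform in $a\in\partial\DOI$. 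Taking any $C^1$ extension of $r^a$ into $\{t<\rho\}$ --- where $H(s)r^a\equiv0$, so the choice is irrelevant --- yields $r^a\in C^1(\R^3\times\R)$ and the estimate \eqref{2ndTermEstimates}. The constructed $U^a$, being a measure plus a $C^1$ function, has order at most $1$ and so lies in the class of distributions of order $n$. For uniqueness in that class, a difference $W$ of two solutions solves $(\d_t^2-\Delta-q)W=0$ with $W=0$ for $t<0$; since the coefficients are smooth, a standard mollification-plus-energy argument forces $W\equiv0$.

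Because $E_0^a$ is supported on $\{t=\rho\}$ and $H(s)$ vanishes below it, off the light cone we have $U^a=0$ on $\{t<\rho\}$ and $U^a=r^a$ on $\{t>\rho\}$, both $C^1$, which is the claimed regularity off the cone. In particular, for $a\in\partial\DOI$ and $\tau>0$ the point $(a,2\tau)$ lies strictly inside the forward cone, so $U^a(a,2\tau)=r^a(a,2\tau)$; hence $(a,\tau)\mapsto U^a(a,2\tau)$ is well-defined on $\partial\DOI\times(0,1)$ and continuously differentiable in $\tau$, and differentiability of $\tau\mapsto\d_\tau\big(\tau(U_1^a-U_2^a)(a,2\tau)\big)$ follows from the product and chain rules applied to $r^a\in C^1$. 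Finally, the difference estimate comes from one further application of the Goursat machinery, now to an inhomogeneous equation: $r_1^a-r_2^a$ solves $(\d_t^2-\Delta-q_1)(r_1^a-r_2^a)=(q_1-q_2)r_2^a$ in $\{t>\rho\}$ with data $g_1^a-g_2^a$ on the cone, where the source has $C^1$-norm $\lesssim\qnorm{q_1-q_2}\,\norm{r_2^a}_{C^1}\lesssim_{\mathcal M}\qnorm{q_1-q_2}$ and the data has $C^k$-norm $\lesssim\qnorm{q_1-q_2}$; the quantitative Goursat estimate then gives $\norm{r_1^a-r_2^a}_{C^1(\{\rho\leq t\leq2\})}\leq C_{\mathcal M}\qnorm{q_1-q_2}$ uniformly in $a\in\partial\DOI$, and restricting to $x=a$, $t=2\tau$ gives the stated bound for $\beta\in\{0,1\}$ since there $U_1^a-U_2^a=r_1^a-r_2^a$.

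I expect the core difficulty to be the regularity bookkeeping in the Goursat step. The characteristic surface is the curved forward light cone, singular at its vertex, so one must check that the Goursat solution is genuinely $C^1$ up to the smooth part of the cone and across the vertex --- where the Hadamard-type average $g^a$ is only as smooth as $q$ allows --- and one must follow the derivative loss through the entire reduction so that $\qnorm{q}\leq\mathcal M$ really suffices; this is the role of the exponent $\qsmoothness$. A secondary difficulty is making all constants uniform over $a\in\partial\DOI$, which rests on compactness of $\partial\DOI$ together with the fact that the Goursat constants depend on the potential only through its $C^{\qsmoothness}$-norm, not on the position of the cone's vertex.
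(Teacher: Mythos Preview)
Your overall strategy matches the paper's: reduce to a Goursat problem for $r^a$ with data $g^a(x)=\frac1{8\pi}\int_0^1 q(a+\sigma(x-a))\,d\sigma$, invoke Theorem \ref{goursatWellPosed}, and read off the estimates. Two points deserve attention.

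First, there is a logical gap at the sentence ``an ansatz of the form \eqref{ansatz} solves \eqref{EQ1}--\eqref{EQ2} \emph{precisely when} $r^a$ solves the Goursat problem \ldots\ with $r^a=g^a$ on $\{t=\rho\}$.'' You derived two conditions from the ansatz: the PDE in $\{t>\rho\}$ and the \emph{transport equation} $(\rho\d_t+1+(x-a)\cdot\nabla)r^a=q/(8\pi)$ on $\{t=\rho\}$. Integrating the transport equation along rays gives the Dirichlet trace $g^a$, but the converse is not automatic: a solution of the Dirichlet Goursat problem with data $g^a$ does not a priori satisfy the transport equation, which involves the transversal derivative $\d_t r^a$ on the cone. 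The paper closes this gap by proving, as part of Theorem \ref{goursatWellPosed}, the additional boundary identity $(\d_t+\d_r)r^a=\d_r g^a$ on $\{t=\rho\}$; combining this with the computation $\d_r g^a=-g^a/\rho+q/(8\pi\rho)$ recovers the transport equation, and only then does Lemma \ref{goursat2PS} apply. You should make this step explicit.

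Second, your uniqueness sketch (``mollification-plus-energy'') is not adequate for distributions of order $n$: mollifying $W$ destroys the equation, and $W$ has no a priori energy. The paper instead uses a duality argument (Lemma \ref{PSuniqueness}): for an arbitrary test function $\varphi$, solve the time-reversed initial value problem $(\d_s^2-\Delta-q)\psi=\varphi$ with vanishing past data via Lemma \ref{initialProblem}, obtaining $\psi\in C^n$; then $\langle W,\varphi\rangle=\langle(\d_t^2-\Delta-q)W,\psi\rangle=0$. Your difference estimate via an inhomogeneous Goursat problem is a valid alternative to the paper's direct use of the two-potential estimate in Theorem \ref{goursatWellPosed}, but note that the Goursat theorem as stated is homogeneous, so you are implicitly invoking a slight extension.
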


The proof of the above will be done by a \emph{progressive wave
  expansion}. This will lead us to a characteristic initial value
problem called the \emph{Goursat problem}. In \cite{RU2} this problem
was mentioned briefly with reference to \cite{Romanov1974}. Another
well-known source on the point source problem is
\cite{Friedlander}. The former studies the point source problem in low
regularity Sobolev spaces, which is not good enough since we need a
uniform $\partial_t$-estimate. The latter suffers from too much
generality and considers only $C^\infty$ smooth coefficients, without
any norm estimates. Neither reference mentions the Goursat problem by
name or defines it explicitly.

There are other sources, more focused on the Goursat problem. For
example \cite{Cagnac} is very detailed on the topic but seems to have
slightly larger smoothness requirements than we do. See also
\cite{BaleanPhD, Balean} for a very detailed analysis but their model
has a region removed from the middle of the characteristic
cone. Therefor we shall also prove well-posedness of the Goursat
problem.

\begin{theorem} \label{goursatWellPosed}
  For $n\in\N, n\geq5$ let $q\in C^n(\R^3)$ and $g\in C^{n+2}(\R^3)$
  with the norm bounds $\norm{q}_{C^n}\leq\mathcal{M}$ and
  $\norm{g}_{C^{n+2}}\leq\mathcal{N}$. Then there is a unique $C^1$
  solution $u$ to the problem
  \begin{alignat*}{2}
    (\partial_t^2 - \Delta - q) u &= 0, &\qquad& x\in\R^3, t >
    \abs{x}\\ u(x,t) &= g(x), &\qquad& x\in\R^3, t=\abs{x}.
  \end{alignat*}
  It is also in $C^s(\R^3\times\R)$ where
  $s=\lfloor\frac{n-2}{3}\rfloor$ and satisfies
  \[
  (\partial_t + \partial_r) u = \partial_r g, \qquad x\in\R^3,
  t=\abs{x}
  \]
  where $\partial_r = \frac{x}{\abs{x}} \cdot \nabla_x$.

  For any $T<\infty$ the solution has the norm estimate
  \[
  \norm{u}_{C^s(\R^3\times{[{0,T}]})} \leq C_{T, n, \mathcal{M}}
  \mathcal N.
  \]
  Finally, if $q_1,q_2\in C^n(\R^3)$ and $g_1,g_2\in C^{n+2}(\R^3)$
  then their corresponding solutions satisfy
  \[
  \norm{u_1-u_2}_{C^s(\R^3\times{[{0,T}]})} \leq C_{T, n, \mathcal{M},
    \mathcal N} \big( \norm{q_1-q_2}_{C^n(\R^3)} +
  \norm{g_1-g_2}_{C^{n+2}(\R^3)} \big).
  \]
\end{theorem}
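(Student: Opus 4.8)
The plan is to convert the Goursat problem into an integral equation adapted to characteristic coordinates and solve it by a Picard iteration, then bootstrap regularity. First I would introduce light-cone coordinates: write $t = |x|$ as the initial surface and parametrize a neighbourhood of the forward cone by the radial variable $r=|x|$, the angular variable $\omega = x/|x| \in S^2$, and the "time past the cone" variable. A convenient choice is to set $\rho = \frac{t+r}{2}$ and $\tau = \frac{t-r}{2}$, so that the cone is $\tau=0$ and the wave operator $\partial_t^2 - \partial_r^2$ becomes (up to lower-order terms from the $\frac{2}{r}\partial_r$ part of $\Delta$ and the angular Laplacian) a multiple of $\partial_\rho\partial_\tau$. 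After multiplying $u$ by $r$ to absorb the first-order radial term — i.e. working with $v = r\,u$ — the equation takes the schematic form $\partial_\rho\partial_\tau v = \frac{1}{r^2}\Delta_\omega v + q\, v$ plus harmless terms, which upon integrating in $\tau$ from $0$ (using the characteristic data $v|_{\tau=0} = r g$ and the derived relation $(\partial_t+\partial_r)u = \partial_r g$, equivalently $\partial_\rho v|_{\tau=0} = \partial_\rho(rg)$) yields a Volterra-type integral equation $v = v_0 + \mathcal{K} v$ where $v_0$ is built from $g$ and $\mathcal{K}$ involves one integration in $\tau$.

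Next I would run the contraction-mapping / Picard iteration on the closed bounded cone $\{0 \le \tau,\ \tau+\rho \le T\}$. The key point is that $\mathcal{K}$ gains an integration in the $\tau$ direction, which on a bounded time interval gives a factor $T$ and, by iterating, factorials $T^k/k!$; the $q$-dependent part contributes factors of $\mathcal{M}$ and the angular-Laplacian part contributes two angular derivatives but no loss in the $(\rho,\tau)$ variables. So the iteration converges in a space of functions with enough angular/radial regularity, and one extracts the $C^1$ solution $u$ together with the estimate $\|u\|_{C^1} \le C_{T,n,\mathcal{M}} \mathcal{N}$ by tracking how many derivatives of $v_0$ (hence of $g$, hence the $C^{n+2}$ norm) are consumed. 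Uniqueness follows from the same Volterra structure: the difference of two solutions satisfies the homogeneous integral equation and Grönwall forces it to vanish.

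The regularity bootstrap is the step I expect to be the main obstacle, and it explains the peculiar exponent $s = \lfloor (n-2)/3 \rfloor$. Each time one wants an extra derivative of $u$ in the full space-time variables, differentiating the equation $\partial_\rho\partial_\tau v = \frac{1}{r^2}\Delta_\omega v + qv + \dots$ costs derivatives on $q$ (one per order, bounded by $\mathcal{M}$ via $C^n$), but crucially differentiating in a transversal direction reintroduces the singular coefficient $1/r^2$ and couples to the angular Laplacian, so one effectively needs to control roughly three angular/radial derivatives of $v$ to gain one space-time derivative of $u$ near $r=0$ — hence $\sim 3s$ derivatives of the data are needed for $u\in C^s$, matching $n \ge 3s+2$. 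I would handle the apparent singularity at the vertex $r=0$ by a separate argument: either by noting that $v=ru$ vanishes to the right order there and using Hardy-type inequalities, or by a direct Taylor expansion of $u$ in $x$ at the vertex using that $g\in C^{n+2}$, showing the singular terms cancel. Finally, the Lipschitz-in-data estimate for $u_1-u_2$ is obtained by subtracting the two integral equations, noting the difference solves an inhomogeneous Volterra equation whose inhomogeneity is linear in $q_1-q_2$ and $g_1-g_2$ (with coefficients controlled by $\mathcal{M},\mathcal{N}$), and re-running the same fixed-point estimate.
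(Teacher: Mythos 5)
Your reduction to characteristic coordinates with $v=ru$ and the Volterra operator $\mathcal{K}$ has a genuine gap at its core: the kernel of $\mathcal{K}$ contains the term $r^{-2}\Delta_\omega v$, which is a \emph{second-order differential operator} in the angular variables, while the inverse of $\partial_\rho\partial_\tau$ only gains integrations in $(\rho,\tau)$ and no smoothing whatsoever in $\omega$. Hence each Picard iterate costs two angular derivatives: $\mathcal{K}^k v_0$ involves roughly $\Delta_\omega^k$ applied to the data, so with $g\in C^{n+2}$ the iterates leave every fixed finite-regularity space after about $n/2$ steps, and the factorial gain $T^k/k!$ from the $\tau$-integration cannot compensate a loss of derivatives. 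A contraction or Neumann series in a fixed $C^k$-type space therefore does not close; one would need energy estimates with commuted rotation vector fields, a Nash--Moser/scale-of-spaces argument, or a different splitting altogether. (The axis $\rho=\tau$, i.e.\ $r=0$, is a second soft spot: it is an artificial boundary interior to the light cone where no data lives, and showing each iterate keeps the vanishing structure needed to tame $r^{-2}\Delta_\omega$ there is again a derivative-counting problem your sketch does not resolve.) A related, smaller gap is uniqueness: the theorem asserts uniqueness among $C^1$ solutions, but a $C^1$ function cannot be plugged into your second-order integral identity without justification, so ``the difference satisfies the homogeneous Volterra equation'' is not available at that regularity.

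For contrast, the paper avoids derivative loss by a different decomposition: a progressive wave expansion $v=\sum_{k\le m}a_k(x)\gamma^k(x,t)$ with $\gamma^k=(t^2-|x|^2)^k/k!$, whose coefficients satisfy a transport recursion ($a_k$ loses two derivatives per step), so that $v$ matches the data on the cone and leaves only the residual $-(q+\Delta)a_m\gamma^m$; this residual, cut off to $t\ge|x|$, is fed into an ordinary forward Cauchy problem solved by iterating $w_{m+1}=K(qw_m)$ against the free-space fundamental solution, where $K$ and multiplication by $q$ preserve $C^{n_0,\tau_0}$ (no angular Laplacian ever appears) and Lemma~\ref{ellipticIntegral} gives the factorial convergence. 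The exponent $s=\lfloor (n-2)/3\rfloor$ then comes from balancing the smoothness $n-2m$ of $(q+\Delta)a_m$ against the transverse smoothness $m-1$ of $\chi_{\{t>|x|\}}\gamma^m$, not from the $r^{-2}$ heuristic you give. Uniqueness in $C^1$ is handled separately by a mollified energy identity on $\{|x|\le t\}$ plus Gr\"onwall (Lemma~\ref{uniqueGoursat}), and the boundary relation $(\partial_t+\partial_r)u=\partial_r g$ is read off from the explicit structure of $v$ and the first-order vanishing of $w$ across the cone.
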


\medskip
We will use the following notation for function spaces of continuous
functions.

\begin{definition}
  Let $s\in\N$ and $X\subset\R^d$ for some $d\in\Z_+$. The set
  $C^s(X)$ contains all $f\colon X\to\C$ that are $s$ times
  continuously differentiable. A subscript of $c$ as in $C^s_c(X)$
  indicates compact support in $X$.

  Given $s,\tau\in\N$ we denote by $C^{s,\tau}(\R^3\times\R)$ the
  space of continuous functions $f\colon \R^3\times\R\to\C$ for which
  $\partial_x^\alpha \partial_t^\beta f$ is continuous when
  $\alpha_1+\alpha_2+\alpha_3\leq s$ and $\beta\leq\tau$.

  For estimates,
  \begin{align*}
    &\norm{f}_{C^s(X)} = \sum_{\abs{\alpha}\leq s} \sup_{p\in X}
    \abs{\partial^\alpha f(p)}\\ &\norm{f}_{C^{s,\tau}(X)} =
    \sum_{\substack{\abs{\alpha}\leq s\\ \beta\leq\tau}}
    \sup_{(x,t)\in X} \abs{\partial_x^\alpha \partial_t^\beta f(x,t)}
  \end{align*}
  where $\alpha$ is a multi-index of appropriate dimension.
\end{definition}

A-priori no uniform bounds are required above. The solution to the
wave equation has finite speed of propagation so the qualitative
statements of our results stay true even for continuous but unbounded
functions.

\section{Goursat problem} \label{goursatSect}
The goal of this section is simple: prove the well-posedness of the
Goursat problem, including norm estimates of the solution with
dependence on the potential $q$ and Dirichlet data $g$ on the
characteristic cone. Before that we will show informally how the
point source problem is reduced to the \emph{Goursat problem}, or
\emph{characteristic initial-boundary value problem}. Lemma
\ref{goursat2PS} validates these informal calculations.

If $\delta, H \in \mathscr D'(\R)$ are the delta-distribution and
Heaviside function, then applying the operator $\partial_t^2 - \Delta
+ q$ to the ansatz
\begin{equation}
  U^a(x,t) = \frac{\delta(t-\abs{x-a})}{4\pi\abs{x-a}} +
  H(t-\abs{x-a}) r^a(x,t)
\end{equation}
gives
\begin{align*}
  &(\partial_t^2 - \Delta - q) U^a = (\partial_t^2-\Delta)
  \frac{\delta(t-\abs{x-a})}{4\pi\abs{x-a}} - \frac{q(x)
    \delta(t-\abs{x-a})}{4\pi \abs{x-a}} \\ & + \delta'(t-\abs{x-a})
  (r^a-r^a) + 2\frac{\delta(t-\abs{x-a})}{\abs{x-a}} \left(
  \abs{x-a}\partial_t r^a + r^a + (x-a) \cdot \nabla r^a \right) \\ &
  + H(t-\abs{x-a}) (\partial_t^2 - \Delta - q)r^a.
\end{align*}
Now $U^a$ will be a solution to \eqref{EQ1}--\eqref{EQ2} if
\begin{alignat*}{2}
  (\partial_t^2 - \Delta - q) r^a &= 0, &\qquad& x\in\R^3,
  t>\abs{x-a}, \\ \left( \abs{x-a}\partial_t + 1 + (x-a) \cdot \nabla
  \right) r^a &= \frac{q}{8\pi}, &\qquad& x\in\R^3, t=\abs{x-a}.
\end{alignat*}
However if $F(x) = \abs{x-a} r^a(x,\abs{x-a})$ then the chain rule
shows that
\begin{equation} \label{BC1}
  \frac{x-a}{\abs{x-a}} \cdot\nabla F = \left( \abs{x-a}\partial_t + 1
  + (x-a) \cdot \nabla \right) r^a(x,\abs{x-a}) = \frac{q(x)}{8\pi}
\end{equation}
and solving for $F$ gives
\begin{equation} \label{BC2}
  r^a(x,\abs{x}) = \frac{1}{8\pi} \int_0^1 q(a+s(x-a)) ds.
\end{equation}

Proving the converse requires more assumptions, so we will skip it
now. Instead we shall show that the Goursat problem
\begin{alignat}{2}
  (\partial_t^2 - \Delta - q) r^a &= 0, &\qquad& x\in\R^3,
  t>\abs{x-a}, \label{EQ1Goursat} \\ r^a &= g, &\qquad& x\in\R^3,
  t=\abs{x-a} \label{EQ2Goursat}
\end{alignat}
has a unique solution in $C^1$ for any $q$ and $g$ smooth enough, and
that this solution also satisfies the boundary condition \eqref{BC1}
when $g$ is chosen from \eqref{BC2}. Natural smoothness conditions are
$q\in C^n$ and $g\in C^{n+2}$.

\begin{definition} \label{gammaDef}
  For $k\in\Z$ define the function $\R^3\times\R\to\R$
  \[
  \gamma^k(x,t) = \begin{cases} \frac{(t^2-\abs{x}^2)^k}{k!},
    &k\in\N\\ 0, &k<0 \end{cases}.
  \]
\end{definition}

\begin{lemma} \label{progressiveWave}
  For $n\in\N$ let $q\in C^n(\R^3)$ and $g\in C^{n+2}(\R^3)$. Let
  $m\leq\lfloor\frac{n}{2}\rfloor+1$ be an integer. Then define
  $v\colon \R^3\times\R \to \C$ by
  \[
  v(x,t) = \sum_{k=0}^m a_k(x) \gamma^k(x,t)
  \]
  where the functions $a_k$ are defined as
  \begin{align}
    a_0(x) &= g(x), \qquad \R^3, \label{A0def}\\ a_{k+1}(x) &=
    \frac{1}{4} \int_0^1 s^{k+1} \big((q+\Delta) a_k\big) (xs) ds,
    \qquad \R^3. \label{Akdef}
  \end{align}
  Then $a_k \in C^{n+2-2k}(\R^3)$. They have the norm estimate
  \[
  \norm{a_k}_{C^{n+2-2k}(\R^3)} \leq \left(
  \frac{1+\norm{q}_{C^n(\R^3)}}{4} \right)^k \norm{g}_{C^{n+2}(\R^3)}.
  \]
  If $q_1,q_2\in C^n(\R^3)$ and $g_1,g_2\in C^{n+2}(\R^3)$ then for
  the corresponding sequences $a_{k1}$ and $a_{k2}$ we have
  \[
  \norm{a_{k1}-a_{k2}}_{C^{n+2-2k}} \leq \big(1+\mathcal M\big)^k
  \norm{g_1-g_2}_{C^{n+2}} + k\big(1+\mathcal M\big)^{k-1} \mathcal N
  \norm{q_1-q_2}_{C^n}
  \]
  whenever $\mathcal M\geq\norm{q_j}_{C^n}$ and $\mathcal
  N\geq\norm{g_j}_{C^{n+2}}$· Moreover
  \begin{alignat*}{2}
    (\partial_t^2 - \Delta - q) v &= -(q+\Delta) a_m \gamma^m,
    &\qquad& x\in\R^3, t\in\R, \\ v(x,t) &= g(x), &\qquad& x\in\R^3,
    t=\pm\abs{x}.
  \end{alignat*}
\end{lemma}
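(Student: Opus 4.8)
The plan is to verify the two claimed identities for $v$ by direct computation, relying on the recursion \eqref{Akdef} to make all the "error terms" telescope. First I would record the basic differentiation rules for the functions $\gamma^k$ from Definition \ref{gammaDef}: since $\gamma^k(x,t) = (t^2-|x|^2)^k/k!$, one has $\partial_t \gamma^k = 2t\,\gamma^{k-1}$, $\nabla_x \gamma^k = -2x\,\gamma^{k-1}$, and more importantly a formula for $(\partial_t^2 - \Delta)\gamma^k$. Computing this in $\R^3$: $\partial_t^2 \gamma^k = 2\gamma^{k-1} + 4t^2\gamma^{k-2}$ and $\Delta\gamma^k = -2\cdot 3\,\gamma^{k-1} + 4|x|^2\gamma^{k-2}$, so $(\partial_t^2-\Delta)\gamma^k = 8\gamma^{k-1} + 4(t^2-|x|^2)\gamma^{k-2} = 8\gamma^{k-1} + 4(k-1)\gamma^{k-1} = (4k+4)\gamma^{k-1}$, i.e. $(\partial_t^2-\Delta)\gamma^k = 4(k+1)\gamma^{k-1}$. (The dimension $3$ is what makes the cross term $t^2-|x|^2$ reassemble into $\gamma^{k-1}$ cleanly; this is where $n=3$ enters.)

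Next I would expand $(\partial_t^2 - \Delta)(a_k\gamma^k)$ using the product rule: it equals $((\partial_t^2-\Delta)a_k)\gamma^k + a_k(\partial_t^2-\Delta)\gamma^k - 2\nabla_x a_k\cdot\nabla_x\gamma^k$ (there is no $\partial_t a_k$ term since $a_k$ is independent of $t$). Since $a_k$ is $t$-independent, $(\partial_t^2-\Delta)a_k = -\Delta a_k$. Using the $\gamma$-identities this becomes $-(\Delta a_k)\gamma^k + 4(k+1)a_k\gamma^{k-1} + 4x\cdot\nabla a_k\,\gamma^{k-1}$. Therefore, summing over $k$ and subtracting $q\,v = \sum_k q a_k\gamma^k$,
\[
(\partial_t^2-\Delta-q)v = \sum_{k=0}^m\Big( 4(k+1)a_k + 4x\cdot\nabla a_k \Big)\gamma^{k-1} - \sum_{k=0}^m (q+\Delta)a_k\,\gamma^k.
\]
Reindexing the first sum ($k\mapsto k+1$) and noting the $k=0$ term contributes to $\gamma^{-1}=0$, the coefficient of $\gamma^k$ for $0\le k\le m-1$ is $4(k+2)a_{k+1} + 4x\cdot\nabla a_{k+1} - (q+\Delta)a_k$. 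The key step is then to check that the recursion \eqref{Akdef} is precisely the statement that this coefficient vanishes: differentiating $a_{k+1}(x) = \tfrac14\int_0^1 s^{k+1}((q+\Delta)a_k)(sx)\,ds$ and using Euler's homogeneity trick $\tfrac{d}{ds}\big(s^{k+2}f(sx)\big) = (k+2)s^{k+1}f(sx) + s^{k+2}x\cdot\nabla f(sx)$ evaluated via the fundamental theorem of calculus gives exactly $4(k+2)a_{k+1} + 4x\cdot\nabla a_{k+1} = (q+\Delta)a_k$. Hence everything cancels except the single leftover term at $k=m$, namely $-(q+\Delta)a_m\,\gamma^m$, which is the claimed formula. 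For the boundary condition, on the cone $t = \pm|x|$ we have $t^2-|x|^2 = 0$, so $\gamma^k(x,t) = 0$ for all $k\ge 1$ and $\gamma^0 \equiv 1$; thus $v(x,t) = a_0(x) = g(x)$ there, using \eqref{A0def}.

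The regularity and norm bookkeeping I would handle separately and more briefly. From \eqref{Akdef}, if $a_k\in C^{n+2-2k}$ then $(q+\Delta)a_k\in C^{\min(n,\,n-2k)} = C^{n-2k}$ (the $q$-multiplication costs nothing beyond $C^n$, the $\Delta$ costs two derivatives), and integrating $((q+\Delta)a_k)(sx)$ against $s^{k+1}\,ds$ over a compact $s$-interval preserves this, so $a_{k+1}\in C^{n-2k} = C^{n+2-2(k+1)}$; induction from $a_0 = g\in C^{n+2}$ gives $a_k\in C^{n+2-2k}$, which requires $2m\le n+2$, i.e. $m\le\lfloor n/2\rfloor+1$ as assumed. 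For the norm estimate, $\|(q+\Delta)a_k\|_{C^{n-2k}} \le (1+\|q\|_{C^n})\|a_k\|_{C^{n+2-2k}}$ and the integral operator $f\mapsto\tfrac14\int_0^1 s^{k+1}f(sx)\,ds$ has $C^j$-operator-norm at most $\tfrac14$ (the substitution $x\mapsto sx$ with $s\le 1$ only shrinks derivatives), giving the geometric bound by induction. The difference estimates follow the same induction with the product-rule splitting $(q_1+\Delta)a_{k1} - (q_2+\Delta)a_{k2} = (q_1-q_2)a_{k1} + (q_2+\Delta)(a_{k1}-a_{k2})$ and tracking the two resulting terms; this produces the stated $k(1+\mathcal M)^{k-1}\mathcal N$ factor on the $\|q_1-q_2\|$ term via a telescoping sum. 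I expect the main obstacle to be purely notational — keeping the reindexing of the $\gamma$-sums and the homogeneity computation for the recursion transparent — rather than any genuine analytic difficulty; the one substantive point to flag is that the clean identity $(\partial_t^2-\Delta)\gamma^k = 4(k+1)\gamma^{k-1}$ is special to spatial dimension $3$.
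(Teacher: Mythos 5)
Your proposal is correct and follows essentially the same route as the paper: the derivative identities for $\gamma^k$, the observation that the recursion \eqref{Akdef} is equivalent to $4(k+2)a_{k+1}+4x\cdot\nabla a_{k+1}=(q+\Delta)a_k$ (which you derive via the $s$-integration by parts the paper leaves implicit), the vanishing of $\gamma^k$ on the cone for the boundary condition, and the same inductions for the smoothness, norm, and difference estimates. The only nitpick is your parenthetical claim that dimension $3$ is what makes the cross term reassemble into $\gamma^{k-1}$ --- the reassembly works in any dimension and only the constant $4(k+1)$ is dimension-dependent --- but this does not affect the argument.
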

\begin{proof}
  Let us start by showing the norm estimates. Obviously $a_0 \in
  C^{n+2}(\R^3)$ with estimate $\norm{a_0}_{C^{n+2}} =
  \norm{g}_{C^{n+2}}$ and $a_{0j}\to a_0$ in norm.  Assume that
  $a_k\in C^{n+2-2k}$. Then $qa_k$ has smoothness $\min(n, n+2-2k)$,
  and $\Delta a_k$ has smoothness $n-2k$. Hence $a_{k+1}$ has
  smoothness $n-2k$ at worst, with norm estimate
  \[
  \norm{a_{k+1}}_{C^{n-2k}} \leq \frac{1}{4}(1+\norm{q}_{C^n})
  \norm{a_k}_{C^{n+2-2k}}
  \]
  whose coefficient could be improved by taking into account the value
  of the integral $\int_0^1 s^{k+1} ds$. The norm estimate for a
  general $k$ is
  \[
  \norm{a_k}_{C^{n+2-2k}} \leq \left( \frac{1+\norm{q}_{C^n}}{4}
  \right)^k \norm{g}_{C^{n+2}}
  \]
  by induction.

  For the difference we note that
  \[
  \big(a_{(k+1)1}-a_{(k+1)2}\big)(x) = \frac{1}{4} \int_0^1 s^{k+1}
  \big( (q_1+\Delta)a_{k1} - (q_2+\Delta)a_{k2} \big)(xs) ds
  \]
  and thus
  \begin{align*}
    &\norm{a_{(k+1)1}-a_{(k+1)2}}_{C^{n-2k}} \\ &\qquad\leq
    (1+\norm{q_1}_C^n) \norm{a_{k1}-a_{k2}}_{C^{n+2-2k}} +
    \norm{q_1-q_2}_{C^n} \norm{a_{k2}}_{C^{n-2k}} \\ &\qquad\leq
    \big(1+\mathcal M\big) \norm{a_{k1}-a_{k2}}_{C^{n+2-2k}} +
    \big(1+\mathcal M\big)^k \mathcal N \norm{q_1-q_2}_{C^n}
  \end{align*}
  in terms of the a-priori bounds. The norm estimate for the
  difference is now a simple induction.

  The claim $(\partial_t^2 - \Delta - q) v = -(q+\Delta) a_m \gamma^m$
  follows from noting that $a_0 = g$, $4x\cdot\nabla a_{k+1} +
  4(2+k)a_{k+1} - (q+\Delta)a_k = 0$, and $\partial_t \gamma^k = 2t
  \gamma^{k-1}$, $\nabla \gamma^k = -2x \gamma^{k-1}$, and then
  finally applying $\partial_t^2 - \Delta - q$ to the definition of
  $v$.
\end{proof}

\begin{lemma} \label{initialProblem}
  Let $n,\tau\in\N$, $q \in C^n(\R^3)$ and $F \in
  C^{n,\tau}(\R^3\times\R)$. Assume that $F(x,t)=0$ when $t<\abs{x}$,
  and consider the problem
  \begin{alignat}{2}
    (\partial_t^2 - \Delta - q) w &= F, &\qquad& x\in\R^3,
    t\in\R, \label{EQ1initial}\\ w &= 0, &\qquad& x\in\R^3,
    t<0. \label{EQ2initial}
  \end{alignat}
  It has a solution $w\in C^{n,\tau}(\R^3\times\R)$ which moreover
  vanishes on $t<\abs{x}$. Given $T<\infty$ and $\mathcal
  M\geq\norm{q}_{C^n(\R^3)}$ it satisfies
  \[
  \norm{w}_{C^{n,\tau}(\R^3\times{[{0,T}]})} \leq C_{T,n,\mathcal M}
  \norm{F}_{C^{s,\tau}(\R^3\times{[{0,T}]})}
  \]
  where
  \[
  C_{T,n,\mathcal M} = C_{n,\tau} \sum_{m=0}^\infty \frac{C_n^m
    \mathcal M^m T^{2(m+1)}}{4^{m+1} (m+1)! (m+2)!} < \infty
  \]
  and $C_{n,\tau}$ and $C_n$ are finite and depend only on the
  parameters in their indices.

  Finally, given such $q_1,q_2$ and $F_1,F_2$ let $w_1,w_2$ be the
  corresponding solutions. With the a-priori bounds
  $\norm{q_j}_{C^n(\R^3)}\leq\mathcal M$ and
  $\norm{F_j}_{C^{n,\tau}(\R^3\times{[{0,T}]})}\leq\mathcal N$ we have
  \[
  \norm{w_1-w_2}_{C^{n,\tau}(\R^3\times{[{0,T}]})} \leq
  C_{T,n,\mathcal M,\mathcal N} \big(
  \norm{F_1-F_2}_{C^{n,\tau}(\R^3\times{[{0,T}]})} +
  \norm{q_1-q_2}_{C^n(\R^3)} \big)
  \]
  where $C_{T,n,\mathcal M, \mathcal N}$ is finite and depends only on
  the parameters in its indices.
\end{lemma}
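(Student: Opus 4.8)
The plan is to realise $w$ as the solution of a Volterra-type integral equation built from the retarded fundamental solution of the flat wave operator, solve it by a Neumann series, and use the forward--cone support of $F$ in an essential way to get the uniform $\partial_t$--estimates. Let $E_0$ be the retarded fundamental solution of $\partial_t^2-\Delta$ on $\R^3\times\R$, so that $(\partial_t^2-\Delta)(E_0*G)=G$, $E_0*G$ vanishes for $t<0$, and one has Kirchhoff's representation
\[
(E_0*G)(x,t)=\int_0^t \frac{r}{4\pi}\int_{S^2} G\big(x+r\omega,\,t-r\big)\,d\sigma(\omega)\,dr .
\]
Then \eqref{EQ1initial}--\eqref{EQ2initial} is equivalent to $w=E_0*(F+qw)$, and I would define the iterates $w_0=E_0*F$, $w_{m+1}=E_0*(q\,w_m)$ and set $w=\sum_{m\ge0}w_m$. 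From the representation one reads off the basic bound $\abs{(E_0*G)(x,t)}\le\int_0^t(t-s)\sup_y\abs{G(y,s)}\,ds$.

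Next comes an induction on $m$ showing that each $w_m$ is supported in $\{t\ge\abs{x}\}$ and lies in $C^{n,\tau}(\R^3\times\R)$. The support statement is the convolution-of-supports inclusion together with closedness of the solid forward cone under addition; in particular $w_m$ and its derivatives vanish on $\{t=0\}$. This is what tames the time derivatives: differentiating Kirchhoff's representation in $t$ produces a boundary term which is a spherical mean of the source at time $0$, and that vanishes here, so $\partial_t^\beta(E_0*G)=E_0*\partial_t^\beta G$ for forward-cone-supported $G\in C^{n,\tau}$ and all $\beta\le\tau$ (while $\partial_x^\alpha$ commutes with $E_0*$ for free). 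Combined with the Leibniz rule, which turns $\partial_x^\alpha(q\,w_m)$ into a sum of derivatives of $q$ of order $\le n$ times derivatives of $w_m$ of order $\le(n,\tau)$, this gives $\partial_x^\alpha\partial_t^\beta w_{m+1}=E_0*\partial_x^\alpha\partial_t^\beta(q\,w_m)$ and closes the induction.

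The quantitative bound then follows from the Volterra structure. Writing $N_m(t)=\sum_{\abs{\alpha}\le n,\ \beta\le\tau}\sup_x\abs{\partial_x^\alpha\partial_t^\beta w_m(x,t)}$, the previous step together with the basic bound and Leibniz yields $N_0(t)\le\tfrac{t^2}{2}\norm{F}_{C^{n,\tau}}$ and $N_{m+1}(t)\le C_{n,\tau}\mathcal M\int_0^t(t-s)N_m(s)\,ds$; iterating the operator $h\mapsto\int_0^t(t-s)h(s)\,ds$, whose $m$-th power sends $1$ to $t^{2m}/(2m)!$, produces a geometric-times-factorial majorant for the $N_m$, hence uniform convergence of $\sum_m w_m$ in $C^{n,\tau}(\R^3\times[0,T])$ with a bound of the asserted shape (the precise constant $C_{T,n,\mathcal M}$ displayed in the statement is a matter of bookkeeping; any convergent majorant of this type is enough). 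Term-by-term differentiation of the series then shows that $w$ solves \eqref{EQ1initial}--\eqref{EQ2initial}, and $w$ inherits the support property.

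For uniqueness, the difference $v$ of two solutions satisfies $v=E_0*(q\,v)$, so writing $\mathcal K\colon h\mapsto E_0*(q\,h)$ one has $v=\mathcal K^m v$ for all $m$, and the factorial decay of $\norm{\mathcal K^m}$ over $[0,T]$ forces $v\equiv0$. The difference estimate follows by applying the estimate just proved to $v=w_1-w_2$, which solves $(\partial_t^2-\Delta-q_1)v=(F_1-F_2)+(q_1-q_2)w_2$ with zero data for $t<0$ and a forward-cone-supported right-hand side in $C^{n,\tau}$, after bounding $\norm{(q_1-q_2)w_2}_{C^{n,\tau}}\le C_n\norm{q_1-q_2}_{C^n}\norm{w_2}_{C^{n,\tau}}$ and $\norm{w_2}_{C^{n,\tau}}\le C_{T,n,\mathcal M}\norm{F_2}_{C^{n,\tau}}$. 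I expect the real obstacle to be the second step --- getting uniform bounds on $\partial_t^\beta w$ up to $\beta=\tau$ without the customary loss of one spatial derivative per time derivative. That is exactly where the hypothesis that $F$ vanishes on $\{t<\abs{x}\}$ is indispensable: it kills every boundary term produced when $\partial_t$ hits the retarded potential, so $\partial_t$ commutes with $E_0*$ and the Neumann iteration closes in the mixed space $C^{n,\tau}$.
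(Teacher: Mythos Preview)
Your proposal is correct and follows the same overall architecture as the paper: build the solution by Picard iteration $w_0=KF$, $w_{m+1}=K(qw_m)$ with $K$ the retarded potential, show each iterate is cone-supported and in $C^{n,\tau}$, and sum. The difference is purely in the quantitative bookkeeping. You run a standard Volterra iteration in time only, using $\abs{(Kf)(x,t)}\le\int_0^t(t-s)\sup_y\abs{f(y,s)}\,ds$, which yields majorants of order $t^{2(m+1)}/(2m+2)!$ and requires a separate convolution-of-supports argument to get vanishing on $t<\abs{x}$. The paper instead tracks the \emph{pointwise} bound $\abs{\partial^{\alpha,\beta} w_m(x,t)}\le C_m\mathcal M^m\norm{F}(t^2-\abs{x}^2)^{m+1}$; the induction step needs the integral $\int_{\abs{y}+\abs{x-y}\le t}\big((t-\abs{y})^2-\abs{x-y}^2\big)^{m+1}\abs{y}^{-1}\,dy$, which is reduced to a ball integral by a Lorentz boost (Lemma~\ref{ellipticIntegral}), producing the exact constant $C_n^m/\big(4^{m+1}(m+1)!(m+2)!\big)$ in the statement and giving the cone support for free. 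Your route is more elementary and avoids that lemma; the paper's route is sharper and explains the specific form of $C_{T,n,\mathcal M}$. One small remark: your emphasis on the cone support being ``indispensable'' to kill boundary terms when commuting $\partial_t$ past $K$ is a feature of the Kirchhoff spherical-mean representation; in the paper's convolution form $Kf(x,t)=\int_{\R^3}f(x-y,t-\abs{y})\,dy/(4\pi\abs{y})$ the commutation is immediate and the cone support is used only to make the integral finite.
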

\begin{proof}
  Consider the operator
  \[
  K f(x,t) = \int_{\R^3} \frac{f(x-y,t-\abs{y})}{4\pi\abs{y}} dy
  \]
  giving $(\partial_t^2 - \Delta) K f = f$ for compactly supported
  distributions $f \in \mathscr E'(\R^3\times\R)$ and $K f(x,t) = 0$
  for $t<\inf_t \supp f$. This is also true for $f$ supported on
  $\abs{x} \leq t$ (see Theorem 4.1.2 in \cite{Friedlander}) and then
  the integration area becomes $\abs{x-y}+\abs{y}\leq t$. By Lemma
  \ref{ellipticIntegral}
  \[
  \abs{\partial_x^\alpha \partial_t^\beta Kf(x,t)} \leq \begin{cases}
    \sup_{\R^3 \times {]{{-\infty},t}[}} \abs{\partial_x^\alpha
      \partial_t^\beta f} \frac{t^2-\abs{x}^2}{8},& t>\abs{x}\\ 0,&
    t\leq\abs{x} \end{cases}
  \]
  when $\partial_x^\alpha \partial_t^\beta f$ is a continuous
  function. In essence $Kf$ has the same smoothness properties as $f$.

  The equation $(\partial_t^2 - \Delta - q) w = F$ with $w=0$ for
  negative time is equivalent to $w = K F + K(qw)$. Set $w_0(x,t) =
  KF(x,t)$ and $w_{m+1} = K(q w_m)$, and we will build the final
  solutions as
  \[
  w = \sum_{m=0}^\infty w_m.
  \]
  We see immediately by the properties of $K$ that $w_m\in
  C^{n,\tau}(\R^3\times\R)$ for all $m$ and that they vanish on
  $t<\abs{x}$. Moreover
  \[
  \abs{\partial_x^\alpha \partial_t^\beta w_0(x,t)} \leq
  \sup_{\R^3\times{[{0,t}]}} \abs{\partial_x^\alpha \partial_t^\beta
    F} \frac{t^2-\abs{x}^2}{8}
  \]
  when $t>\abs{x}$ and $\alpha_1+\alpha_2+\alpha_3\leq n$,
  $\beta\leq\tau$.

  Let us prove the claim by induction. Assume that for any
  $\alpha_1+\alpha_2+\alpha_3\leq n$ and $\beta\leq\tau$ we have
  \begin{equation} \label{inductionAssumption}
    \abs{\partial_x^\alpha \partial_t^\beta w_m(x,t)} \leq C_m
    \norm{q}_{C^n(\R^3)}^m \norm{F}_{C^{n,\tau}(\R^3\times{[{0,t}]})}
    (t^2-\abs{x}^2)^{m+1}
  \end{equation}
  for some $C_m$ which might depend on the other parameters. Then
  recall $w_m=0$ for $t<\abs{x}$ and the definition of $w_{m+1}$. We
  get
  \begin{align*}
    &\abs{\partial_x^\alpha \partial_t^\beta w_{m+1}(x,t)} = \abs{
      \int_{\R^3} \frac{\partial_x^\alpha \big(q(x-y) \partial_t^\beta
        w_m(x-y, t-\abs{y})\big)}{4\pi\abs{y}} dy } \\ &\qquad \leq
    C_m \sum_{\gamma\leq\alpha} {\alpha\choose\gamma}
    \norm{q}_{C^n(\R^3)}^{m+1}
    \norm{F}_{C^{n,\tau}(\R^3\times{[{0,t}]})} \\ &\qquad\quad \cdot
    \int_{\abs{x-y}+\abs{y}\leq t} \frac{\big((t-\abs{y})^2 -
      \abs{x-y}^2\big)^{m+1}}{4\pi\abs{y}} dy\\ &\qquad = \frac{C_m
      C_{s,n}}{4(m+2)(m+3)} \norm{q}_{C^n(\R^3)}^{m+1}
    \norm{F}_{C^{n,\tau}(\R^3\times{[{0,t}]})} (t^2-\abs{x}^2)^{m+2}
  \end{align*}
  where the last equality comes from Lemma \ref{ellipticIntegral}, and
  where
  \[
  C_n = \max_{\abs{\alpha}\leq n} \sum_{\gamma\leq\alpha}
  {\alpha\choose\gamma}.
  \]
  We also have $w_{m+1}(x,t)=0$ for $t<\abs{x}$. Hence we have the
  recursion formula $C_{m+1} = C_m C_n / (4(m+2)(m+3))$ and $C_0 =
  1/8$. This implies that \eqref{inductionAssumption} holds with
  \[
  C_m = \frac{C_n^m}{4^{m+1}(m+1)!(m+2)!}
  \]
  for $m=0,1,\ldots$.

  The series
  \[
  \sum_{m=0}^\infty \abs{\partial_x^\alpha \partial_t^\beta w_m(x,t)}
  \leq \sum_{m=0}^\infty \frac{C_n^m \norm{q}_{C^n(\R^3)}^m
    (t^2-\abs{x}^2)^{m+1}}{4^{m+1} (m+1)!  (m+2)!}
  \norm{F}_{C^{n,\tau}(\R^3\times{[{0,t}]})}
  \]
  converges uniformly for any $t, \abs{x}$ under a given bound, so the
  function $w$ is well defined. Note that the extension of
  $t^2-\abs{x}^2$ by zero to $t<\abs{x}$ is continuous. Hence
  $\partial_x^\alpha \partial_t^\beta w$ is continuous in
  $\R^3\times\R$ when $\alpha_1+\alpha_2+\alpha_3\leq n$ and
  $\beta\leq\tau$. Thus $w\in C^{n,\tau}(\R^3\times\R)$.

  The final claim, continuous dependence on $q$ and $F$, follows from
  the previous estimates. Namely, we note that $w_1$ and $w_2$ satisfy
  the assumptions of the source term $F$, and the difference $w_1-w_2$
  solves
  \[
  (\partial_t^2 - \Delta - q_1)(w_1-w_2) = F_1 - F_2 + (q_1-q_2)w_2
  \]
  with $w_1-w_2=0$ for $t<\abs{x}$. The
  $C^{n,\tau}(\R^3\times{[{0,T}]})$-norm of the right-hand side is
  bounded above by
  \[
  C_{T,n,\mathcal M} \big( \norm{F_1-F_2}_{C^{n,\tau}} +
  \norm{q_1-q_2}_{C^n} C_{T,n,\mathcal M} \norm{F_2}_{C^{n,\tau}}
  \big)
  \]
  and the claim follows from the a-priori bound on $F_2$.
\end{proof}

\begin{lemma} \label{uniqueGoursat}
  Let $u\colon \R^3\times\R\to\C$ be a $C^1$-function satisfying
  \begin{alignat*}{2}
    (\partial_t^2 - \Delta - q) u &= 0, &\qquad& x\in\R^3, t >
    \abs{x}\\ u(x,t) &= g(x), &\qquad& x\in\R^3, t=\abs{x}
  \end{alignat*}
  for some $q\in C^0(\R^3)$ and $g\in C^1(\R^3)$. If $g=0$ then $u=0$
  in $\abs{x}\leq t$.
\end{lemma}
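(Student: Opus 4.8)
The plan is to run a characteristic energy--flux identity in the \emph{lens} cut out by two light cones and close it with Gr\"onwall's inequality. Fix $(x_0,t_0)$ with $\abs{x_0}<t_0$; since $u=g=0$ on $\{t=\abs{x}\}$ it suffices to prove $u(x_0,t_0)=0$. Work in $\Omega=\{(x,t):\abs{x}\le t\ \text{and}\ \abs{x-x_0}\le t_0-t\}$, bounded below by a piece of the forward cone $t=\abs{x}$ (vertex at the origin) and above by a piece of the backward cone $\abs{x-x_0}=t_0-t$ (vertex at $(x_0,t_0)$). For $t\in[0,t_0]$ set $D_t=\Omega\cap\{\text{time}=t\}$, let $e=\tfrac12(\abs{\d_t u}^2+\abs{\nabla u}^2)$ and $\mathbf p=-\operatorname{Re}(\overline{\d_t u}\,\nabla u)$, and define the augmented local energy $\mathcal E(t)=\int_{D_t}\big(e+\tfrac12\abs{u}^2\big)\,dx$. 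Since $D_0=\{0\}$ and $\abs{D_t}\to0$ as $t\to0^+$, we have $\mathcal E(0)=0$.

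First I would differentiate $\mathcal E$ via Reynolds' transport theorem, using the energy--momentum identity $\d_t e+\nabla\cdot\mathbf p=\operatorname{Re}\big(\overline{\d_t u}\,(\d_t^2-\Delta)u\big)=q\operatorname{Re}(\overline{\d_t u}\,u)$, the elementary identity $\d_t\tfrac12\abs{u}^2=\operatorname{Re}(\bar u\,\d_t u)$, and the divergence theorem on $D_t$. This yields bulk terms plus surface fluxes over $\{\abs{x}=t\}\cap D_t$ and $\{\abs{x-x_0}=t_0-t\}\cap D_t$, whose boundaries move at unit normal speed. Both pieces are characteristic, and on each the energy part of the flux density collapses after completing the square to a sum of squares of derivatives of $u$ \emph{tangent to that cone}: over the backward cone it is $\tfrac12\abs{\d_t u-\widehat{x-x_0}\cdot\nabla u}^2+\tfrac12\abs{\nabla_\perp u}^2\ge0$ and enters $\mathcal E'(t)$ with a favourable sign, while over the forward cone it is $\tfrac12\abs{(\d_t+\d_r)u}^2+\tfrac12\abs{\nabla_\perp u}^2$, which \emph{vanishes identically}: $u\equiv0$ on $\{t=\abs{x}\}$ forces all its tangential derivatives to vanish there, and the null generator $(1,x/\abs{x})$ is tangent to that cone. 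The $\tfrac12\abs{u}^2$-fluxes are handled the same way ($u=0$ on the forward cone, and the backward-cone term has a good sign). Here $\nabla_\perp u$ denotes the component of $\nabla u$ tangent to the relevant sphere.

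What remains is an inequality $\mathcal E'(t)\le M\int_{D_t}\abs{\d_t u}\,\abs u\,dx+\int_{D_t}\operatorname{Re}(\bar u\,\d_t u)\,dx\le C\,\mathcal E(t)$, where $M=\sup_\Omega\abs q<\infty$ (finite since $\Omega$ is compact and $q$ continuous, even if $q$ is unbounded globally) and $C$ depends only on $M$ and $t_0$, using $2\operatorname{Re}(\overline{\d_t u}\,u)\le\abs{\d_t u}^2+\abs u^2\le 2e+\abs u^2$. With $\mathcal E(0)=0$ and $\mathcal E$ continuous and nonnegative, Gr\"onwall's inequality forces $\mathcal E\equiv0$ on $[0,t_0]$. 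Since $e$ and $\tfrac12\abs u^2$ are both nonnegative, $u\equiv0$ on the interior of $\Omega$, hence by continuity $u\equiv0$ on $\overline\Omega$, in particular $u(x_0,t_0)=0$. Since $(x_0,t_0)$ was an arbitrary point of $\{\abs x<t\}$ and $u=g=0$ where $\abs x=t$, we conclude $u=0$ in $\{\abs x\le t\}$.

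The \emph{main obstacle} is that $u$ is only $C^1$, so the pointwise identity $\d_t e+\nabla\cdot\mathbf p=q\operatorname{Re}(\overline{\d_t u}\,u)$ is not literally available. I would cure this by mollifying in space-time inside the open set $\{t>\abs x\}$, where the equation holds distributionally: because the principal part $\d_t^2-\Delta$ has constant coefficients there is no commutator error, so $u_\varepsilon:=u*\phi_\varepsilon$ is smooth and satisfies $(\d_t^2-\Delta)u_\varepsilon=(qu)*\phi_\varepsilon$ \emph{exactly} on $\{t>\abs x+3\varepsilon\}$, with $u_\varepsilon\to u$ in $C^1_{\mathrm{loc}}$ and the right-hand side converging in $C^0_{\mathrm{loc}}$. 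Running the argument for $u_\varepsilon$ on a slightly raised lens with bottom face on $\{t=\abs x+\delta\}$ introduces one extra flux over that face; but $(\d_t+\d_r)u$, $\nabla_\perp u$ and $u$ all vanish on $\{t=\abs x\}$, hence are $o(1)$ uniformly on compacta as $\delta\to0$ by continuity of the first derivatives of $u$, so this extra flux tends to $0$ as $\delta,\varepsilon\to0$ and the estimate passes to the limit. (Alternatively, replace the pointwise energy identity by its already-integrated weak form; the bookkeeping is the same.) Apart from this limiting argument, the only real work is the Reynolds-transport/divergence-theorem computation of the two cone fluxes and checking the sign of each.
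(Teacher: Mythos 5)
Your argument is correct and is in essence the paper's own proof: a local energy functional augmented by $\abs{u}^2$, the observation that the flux through the characteristic cone $t=\abs{x}$ involves only derivatives tangent to the cone and hence vanishes when $g=0$, a mollification step to compensate for $u$ being merely $C^1$, and Gr\"onwall's inequality. The only deviation is cosmetic: you localize to the lens capped by a backward light cone (which makes $\sup\abs{q}$ automatically finite and gives the conclusion point by point), whereas the paper differentiates the energy over the full balls $\{\abs{x}\leq t\}$, identifies the cone flux with $\abs{\nabla g}^2+\abs{g}^2$, and relies on finite speed of propagation to dispense with a global bound on $q$.
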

\begin{proof}
  Define
  \[
  E(t) = \int_{\abs{x}\leq t} (\abs{\partial_t u}^2 + \abs{\nabla u}^2
  + \abs{u}^2) dx.
  \]
  We would like to differentiate $E$ with respect to time, however the
  lack of continuous second derivatives prevents us from doing that
  directly. Let $\varphi_\varepsilon$ be a mollifier and
  $u_\varepsilon = \varphi_\varepsilon \ast u$. Let $E_\varepsilon(t)
  = \int_{\abs{x}\leq t} ( \abs{\partial_t u_\varepsilon}^2 +
  \abs{\nabla u_\varepsilon}^2 + \abs{u_\varepsilon}^2 ) dx$. Then
  \begin{align*}
    &E_\varepsilon'(t) = \int_{\abs{x}=t} \big( \abs{\partial_t
      u_\varepsilon}^2 + \abs{\nabla u_\varepsilon}^2 +
    \abs{u_\varepsilon}^2 \big) d\sigma(x) + 2 \Re \int_{\abs{x}\leq
      t} \partial_t u_\varepsilon \cdot \overline{\partial_t^2
      u_\varepsilon} dx\\ &\qquad\quad + 2\Re \int_{\abs{x}\leq t}
    \nabla \partial_t u_\varepsilon \cdot \overline{\nabla
      u_\varepsilon} dx + 2\Re \int_{\abs{x}\leq t} \partial_t
    u_\varepsilon \overline{u_\varepsilon} dx.
  \end{align*}
  Integration by parts shows that the third term is equal to
  \[
  2\Re \int_{\abs{x}=t} \frac{x}{\abs{x}} \partial_t u_\varepsilon
  \cdot \overline{\nabla u_\varepsilon} d\sigma(x) - 2 \Re
  \int_{\abs{x}\leq t} \partial_t u_\varepsilon \overline{\Delta
    u_\varepsilon} dx.
  \]
  By combining both equations above and using $\partial_t^2
  u_\varepsilon - \Delta u_\varepsilon = \varphi_\varepsilon\ast(qu)$
  we get
  \begin{align*}
    &E_\varepsilon'(t) = \int_{\abs{x}=t} \left( \abs{
      \frac{x}{\abs{x}} \partial_t u_\varepsilon + \nabla
      u_\varepsilon}^2 + \abs{u_\varepsilon}^2 \right)
    d\sigma(x)\\ &\qquad\quad + 2\Re \int_{\abs{x}\leq t} \partial_t
    u_\varepsilon \overline{(u_\varepsilon +
      \varphi_\varepsilon\ast(qu))} dx.
  \end{align*}
  Integrate this with respect to time. Since $u_\varepsilon \to u$ in
  $C^1$ locally as $\varepsilon\to0$, we get
  \begin{align*}
    &E(t) = \int_0^t \int_{\abs{x}=s} \left( \abs{ \frac{x}{\abs{x}}
      \partial_s u + \nabla u}^2 + \abs{u}^2 \right) d\sigma(x)
    ds\\ &\qquad\quad + \int_0^t 2\Re \int_{\abs{x}\leq s}
    (1+\overline{q}) \partial_s u \overline{u} dx ds.
  \end{align*}

  Let us deal with the boundary integral next. Define $u_b(x) =
  u(x,\abs{x})$. Then calculus shows that $\nabla u_b(x) = (\nabla u +
  \frac{x}{\abs{x}} \partial_t u)(x,\abs{x})$ because $\nabla \abs{x}
  = x/\abs{x}$· On the other hand the boundary condition of $u$ shows
  that $u_b=g$. Thus the formula inside the parenthesis above is equal
  to $\abs{\nabla g}^2 + \abs{g}^2$.

  Note that $\int_0^t \int_{\abs{x}=s} f(x) dx ds = \int_{\abs{x}\leq
    t} f(x) dx$ for time-independent functions $f$. Then, since
  $2\Re(A\overline{B}) \leq \abs{A}^2 + \abs{B}^2$, we get
  \[
  E(t) \leq \int_{\abs{x}\leq t} \big( \abs{\nabla g}^2 + \abs{g}^2
  \big) dx + (1+\norm{q}_\infty) \int_0^t \int_{\abs{x}\leq s} \big(
  \abs{\partial_s u}^2 + \abs{u}^2 \big) dx ds.
  \]
  The last integral has the upper bound $\int_0^t E(s) ds$.
  Gr\"onwall's inequality, for example \mbox{Appendix~B.2.k} in
  \cite{Evans}, shows that $E(t)=0$ when $g=0$.
\end{proof}

\bigskip
We are now ready to prove the well-posedness of the Goursat problem in
the sense of Hadamard. Strictly speaking the same proof shows
existence in $C^0$ when $q\in C^2$, $g\in C^4$, but then we cannot
guarantee uniqueness or the boundary identity that's stated with
$\partial_t$ and $\partial_r$.

\begin{proof}[Proof of Theorem \ref{goursatWellPosed}]
  This is a consequence of the uniqueness of Lemma
  \ref{uniqueGoursat}, the progressive wave expansion of Lemma
  \ref{progressiveWave} and the initial value problem of Lemma
  \ref{initialProblem}. Let $m=\lfloor(n+1)/3\rfloor$, which has
  $m\geq2$ and $n\geq2m+1$, and set
  \begin{equation} \label{vFixed}
    v(x,t) = g(x)+ a_1(x) (t^2-\abs{x}^2)+ \cdot+ a_m(x) \gamma^m(x,t)
  \end{equation}
  for $(x,t)\in\R^3\times\R$, as in Lemma \ref{progressiveWave}. We
  have $\norm{a_k}_{C^{n+2-2k}}\leq C_n(1+\mathcal{M})^k\mathcal{N}$
  in $\R^3$. Then $v(x,\abs{x}) = g(x)$ but $(\partial_t^2-\Delta-q)v
  = -(q+\Delta)a_m \gamma^m$.

  Next let
  \begin{equation} \label{Ffixed}
    F(x,t) = \begin{cases} (q+\Delta)a_m(x) \gamma^m(x,t),
      &t>\abs{x}\\ 0, &t\leq\abs{x}
    \end{cases}
  \end{equation}
  be our source term for an initial value problem. We have
  $(q+\Delta)a_m\in C^{n-2m}(\R^3)$, but $\chi_{\{t>\abs{x}\}}
  \gamma^m$ is in $C^{m-1}(\R^3\times\R)$. Hence $F \in
  C^{n_0,\tau_0}(\R^3\times\R)$ using the notation of Lemma
  \ref{initialProblem} whenever $n_0+\tau_0\leq m-1$ and
  $n_0\leq\min(n-2m,m-1)=m-1$. In other words when $n_0+\tau_0\leq
  s$. Given $T>0$ the source has the estimate
  \[
  \norm{F}_{C^{n_0,\tau_0}(\R^3\times{[{0,T}]})} \leq C_{T, n,
    \mathcal{M}} \mathcal{N}.
  \]
  We can also write out the estimate for $v$ now that the smoothness
  indices are fixed. Note that $\gamma^k$ is infinitely smooth in
  $\R^3\times\R$, and $a_m$ has the worst smoothness among all the
  coefficient functions in \eqref{vFixed}. Thus
  \begin{equation} \label{vestim}
    \norm{v}_{C^{n_0,\tau_0}(\R^3\times{[{0,T}]})} \leq C_{T, n,
      \mathcal{M}} \mathcal{N}
  \end{equation}
  too since $n_0\leq m$ and $a_k$ is independent of $t$.

  Let $w$ solve $(\partial_t^2-\Delta-q)w = F$ in $\R^3\times\R$ with
  $w=0$ for $t<0$. Lemma \ref{initialProblem} shows that such a $w$
  exists in $C^{n_0,\tau_0}(\R^3\times\R)$ and it has support on
  $t\geq\abs{x}$. Given $T>0$ it has the norm estimate
  \begin{equation} \label{westim}
    \norm{w}_{C^{n_0,\tau_0}(\R^3\times{[{0,T}]})} \leq C_{T, n,
      \mathcal{M}} \mathcal N
  \end{equation}
  by the estimate on $F$.

  Since $s\geq1$ then $F\in C^{0,1}\cap C^{1,0}$ with support in
  $t\geq\abs{x}$. This implies that $\partial_t w$ and $\nabla_x w$
  are continuous. Since $w=0$ when $t<\abs{x}$ we see that
  $(\partial_t+\frac{x}{\abs{x}}\cdot\nabla_x)w=0$ for
  $t\leq\abs{x}$. Next consider $v$. We see that on $t=\abs{x}$
  \[
  \partial_t \gamma^k(x,t) = \begin{cases} 2t, &k=1,\\0,
    &k\neq1 \end{cases}
  \]
  and
  \[
  \nabla_x \gamma^k(x,t) = \begin{cases} -2x, &k=1,\\0,
    &k\neq1 \end{cases},
  \]
  so $\partial_t v = 2t a_1$ and $\nabla_x v = \nabla g - 2x a_1(x)$
  if $t=\abs{x}$. This implies that
  \[
  \left(\partial_t + \frac{x}{\abs{x}} \cdot \nabla_x\right) v =
  \frac{x}{\abs{x}}\cdot\nabla g(x)
  \]
  on $t=\abs{x}$.
 
  If we set $u=v+w$, then we see that $u(x,\abs{x}) = g(x)$ and
  $(\partial_t+\partial_r)u=\partial_rg$ on $t=r=\abs{x}$ because $w$
  is continuous in $\R^3\times\R$ and supported on
  $t\geq\abs{x}$. Moreover $u\in C^s$ since
  \[
  \norm{u}_{C^s(\R^3\times{[{0,T}]})} \leq C \sup_{n_0+\tau_0\leq s}
  \norm{u}_{C^{n_0,\tau_0}(\R^3\times{[{0,T}]})}
  \]
  and this gives us the required norm estimate from \eqref{vestim} and
  \eqref{westim}. Finally $(\partial_t^2-\Delta-q)u =
  (\partial_t^2-\Delta-q)v + F = 0$ on $t>\abs{x}$.

  The estimate for the difference of solutions $u_1-u_2$ to two
  Goursat problems follows from the corresponding estimate for
  $v_1-v_2$ of Lemma \ref{progressiveWave} and for $w_1-w_2$ of Lemma
  \ref{initialProblem}. After using the latter note that
  \[
  \norm{F_1-F_2}_{C^{n_0,\tau_0}} \leq C_{T,n} \big(1+\mathcal M\big)
  \norm{a_{m1}-a_{m2}}_{C^{n_0}} + \norm{q_1-q_2}
  \norm{a_{m2}}_{C^{n_0}}
  \]
  holds and thus can be estimated above by the norms of $q_1-q_2$ and
  $g_1-g_2$.
\end{proof}

\section{Well-posedness of the point source backscattering measurements}
Now that the Goursat problem has been taken care of we can focus on
the point source problem. We will show that given a $\qspace$
potential $q$ there is a unique solution to \eqref{EQ1}--\eqref{EQ2},
and we can define the associated backscattering measurements. Moreover
these measurements depend continuously on the potential, with linear
modulus of continuity.

\begin{lemma} \label{goursat2PS}
  Let $q\in C^0_c(\DOI)$ and $a\in\partial\DOI$. Let $r^a\in
  C^1(\R^3\times\R)$ solve the problem
  \begin{alignat*}{2}
    (\partial_t^2 - \Delta - q) r^a &= 0, &\qquad& x\in\R^3,
    t>\abs{x-a}, \\ \left( \abs{x-a} \partial_t + 1 + (x-a) \cdot
    \nabla \right) r^a &= \frac{q}{8\pi}, &\qquad& x\in\R^3,
    t=\abs{x-a}.
  \end{alignat*}
  Define
  \[
  U^a(x,t) = \frac{\delta(t-\abs{x-a})}{4\pi\abs{x-a}} +
  H(t-\abs{x-a}) r^a(x,t)
  \]
  where $\delta, H \in \mathscr D'(\R)$ are the delta-distribution and
  Heaviside function. Then $U^a$ is a solution to the point source
  problem \eqref{EQ1}--\eqref{EQ2}.
\end{lemma}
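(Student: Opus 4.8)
The plan is to turn the formal calculation at the start of Section~\ref{goursatSect} into a rigorous one: apply $\partial_t^2-\Delta-q$ to $U^a$ in $\D'(\R^3\times\R)$, check that the result is $\delta(x-a,t)$, and read off \eqref{EQ2} from supports. Decompose $U^a=E_a+R$, where $E_a(x,t)=\frac{\delta(t-\abs{x-a})}{4\pi\abs{x-a}}$ is the forward fundamental solution of the wave operator in $\R^3\times\R$ with pole moved to $(a,0)$ --- so $(\partial_t^2-\Delta)E_a=\delta(x-a,t)$ by the mapping property of the operator $K$ in the proof of Lemma~\ref{initialProblem} (cf.\ \cite{Friedlander}) --- and $R(x,t)=H(t-\abs{x-a})\,r^a(x,t)$ is a locally integrable function, vanishing on $\{t<\abs{x-a}\}$ and continuous on $\{t\geq\abs{x-a}\}$. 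Both $E_a$ and $R$ have order zero, so $qE_a=\frac{q(x)\delta(t-\abs{x-a})}{4\pi\abs{x-a}}$ and $qR=H(t-\abs{x-a})\,q\,r^a$ are well defined, and $\partial_t^2-\Delta-q$ is its own formal transpose in the bilinear distributional pairing. Thus it suffices to fix $\varphi\in C^\infty_c(\R^3\times\R)$ and show $\langle U^a,(\partial_t^2-\Delta-q)\varphi\rangle=\varphi(a,0)$. For the $E_a$-part this is immediate: $\langle E_a,(\partial_t^2-\Delta-q)\varphi\rangle=\langle(\partial_t^2-\Delta)E_a,\varphi\rangle-\langle qE_a,\varphi\rangle=\varphi(a,0)-\langle qE_a,\varphi\rangle$, so the whole statement reduces to the identity $\langle R,(\partial_t^2-\Delta-q)\varphi\rangle=\langle qE_a,\varphi\rangle$.

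This identity is the heart of the proof, and its main difficulty is that $r^a$ is only $C^1$ and so cannot be differentiated twice. The trick is that on the \emph{open} region $\Omega=\{t>\abs{x-a}\}$ the function $r^a$ solves $(\partial_t^2-\Delta)r^a=q\,r^a$, which is a genuine continuous function. Therefore the integrand in $\langle R,(\partial_t^2-\Delta-q)\varphi\rangle=\int_\Omega r^a\,(\partial_t^2-\Delta)\varphi\,dx\,dt-\int_\Omega q\,r^a\,\varphi\,dx\,dt$ coincides on $\Omega$ with the distributional divergence $\operatorname{div}\vec W$ of the \emph{continuous} field $\vec W$ with time component $r^a\partial_t\varphi-(\partial_tr^a)\varphi$ and spatial part $(\nabla_x r^a)\varphi-r^a\nabla_x\varphi$; this is just the Green identity $u\,(\partial_t^2-\Delta)\varphi-((\partial_t^2-\Delta)u)\varphi=\operatorname{div}\vec W$, and it is legitimate here because the second derivatives of $r^a$ enter only through the combination $(\partial_t^2-\Delta)r^a$. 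Since $\vec W$ is continuous up to $\partial\Omega=\{t=\abs{x-a}\}$ --- a Lipschitz graph over $\R^3$ --- and $\operatorname{div}\vec W$ is continuous, hence integrable, on $\Omega$, the divergence theorem applies (one may see this by mollifying $\vec W$ inside $\Omega$ and passing to the limit), giving $\langle R,(\partial_t^2-\Delta-q)\varphi\rangle=\int_{\{t=\abs{x-a}\}}\vec W\cdot n\,d\sigma$ with $n$ the outward conormal.

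It then remains to evaluate this surface integral. The structural fact that makes this work --- the same one that kills the $\delta'$-terms in the formal calculation --- is that the cone $\{t=\abs{x-a}\}$ is characteristic, $\abs{\nabla_x\abs{x-a}}=1$. Together with the chain rule, which says that $(\partial_t+\partial_\rho)$ applied to a function and then restricted to $t=\abs{x-a}$ equals $\partial_\rho$ of the restriction --- with $\partial_\rho=\frac{x-a}{\abs{x-a}}\cdot\nabla_x$ --- the surface integral reduces to $\int_{\R^3}\big(\varphi_b\,\partial_\rho r^a_b-r^a_b\,\partial_\rho\varphi_b\big)\,dx$, written in terms of the boundary traces $\varphi_b(x)=\varphi(x,\abs{x-a})$ and $r^a_b(x)=r^a(x,\abs{x-a})$. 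Passing to polar coordinates centred at $a$ and integrating by parts once in the radial variable --- the boundary terms vanishing because of the $\rho^2$ volume factor at the vertex and the compact support of $\varphi$ --- turns this into a constant multiple of $\int_{\R^3}\abs{x-a}^{-1}\,\varphi_b(x)\,\partial_\rho\big(\abs{x-a}\,r^a_b(x)\big)\,dx$. But the boundary condition on $r^a$ is precisely \eqref{BC1}--\eqref{BC2}, i.e.\ $\partial_\rho\big(\abs{x-a}\,r^a_b\big)=q/(8\pi)$; substituting it and keeping track of the constant identifies the surface integral with $\int_{\R^3}\frac{q(x)\varphi(x,\abs{x-a})}{4\pi\abs{x-a}}\,dx=\langle qE_a,\varphi\rangle$, as wanted.

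Combining the two contributions gives $\langle U^a,(\partial_t^2-\Delta-q)\varphi\rangle=\varphi(a,0)$ for every test function, i.e.\ \eqref{EQ1}; and \eqref{EQ2} holds trivially since both $E_a$ and $R$ are supported in $\{t\geq\abs{x-a}\}\subset\{t\geq0\}$. I expect the only real obstacle to be the rigorous justification of the distributional steps under the weak hypothesis $r^a\in C^1$: a naive Leibniz expansion of $H(t-\abs{x-a})r^a$ produces the meaningless term ``$H(t-\abs{x-a})\,\partial_t^2r^a$'', and the remedy is exactly the one used above --- keep the second derivatives of $r^a$ only in the combination $(\partial_t^2-\Delta)r^a$, which on $\Omega$ is continuous, and phrase the Green identity as a divergence theorem for a continuous field with integrable divergence on the Lipschitz cone. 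An equivalent route would be to run the excerpt's computation verbatim for a mollification $r^a_\varepsilon$ of $r^a$ and let $\varepsilon\to0$, the boundary condition ensuring that the coefficient of $\delta(t-\abs{x-a})$ converges uniformly to $q/(8\pi)$ on the cone.
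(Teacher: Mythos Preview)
Your argument is correct and complete: the Green--divergence computation on $\Omega=\{t>\abs{x-a}\}$ goes through, the boundary integral collapses exactly as you describe via the chain rule $(\partial_t+\partial_\rho)f\big|_{t=\abs{x-a}}=\partial_\rho f_b$, and the radial integration by parts recovers $\langle qE_a,\varphi\rangle$ with the right constant. The only delicate point---applying the divergence theorem to a field $\vec W$ that is merely $C^0$ with $\operatorname{div}\vec W\in C^0(\Omega)$ on a Lipschitz cone---you flag explicitly and handle by mollification, which is fine.

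The paper takes a different route. Rather than pairing with a test function and integrating by parts on the cone, it mollifies the singular factors $\delta$ and $H$ (not $r^a$): it sets $\delta_\varepsilon\in C^\infty_c({]{0,2\varepsilon}[})$, $H_\varepsilon(t)=\int_{-\infty}^t\delta_\varepsilon$, forms $U_\varepsilon$ with these in place of $\delta,H$, and expands $(\partial_t^2-\Delta-q)U_\varepsilon$ by the Leibniz rule. Because $\supp H_\varepsilon\subset\R_+$, the term $H_\varepsilon(t-\abs{x-a})(\partial_t^2-\Delta-q)r^a$ is supported where the interior equation already kills it, and the remaining term is $2\abs{x-a}^{-1}\delta_\varepsilon(t-\abs{x-a})$ times the continuous function $(\abs{x-a}\partial_t+1+(x-a)\cdot\nabla)r^a-\tfrac{q}{8\pi}$, which vanishes on the cone; a short uniform-continuity argument then shows this term tends to $0$ in $\mathscr D'$ as $\varepsilon\to0$. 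Your approach is more geometric and avoids any limiting procedure, at the price of invoking the divergence theorem in a slightly nonstandard setting; the paper's approach is more computational and elementary, at the price of a careful $\varepsilon\to0$ argument. Both hinge on the same structural facts---cancellation of the $\delta'$ contributions by the characteristic condition, and the transport equation on the cone---so neither is deeper than the other.
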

\begin{proof}
  Take the above form of $U^a$ as an ansatz and note that the first
  term is the Green's function for $\partial_t^2 - \Delta$
  \begin{equation} \label{Green}
    (\partial_t^2 - \Delta) \frac{\delta(t-\abs{x-a})}{4\pi\abs{x-a}}
    = \delta(x-a,t)
  \end{equation}
  by for example Theorem 4.1.1 in \cite{Friedlander}.

  Since the function $r^a$ in our ansatz is a-priori only $C^1$, we
  will use a smoothened delta-distribution and Heaviside function. For
  $\varepsilon>0$ let $\delta_\varepsilon\colon \R\to\R$ be smooth,
  supported in ${]{0,2\varepsilon}[}$, positive, and $\int
  \delta_\varepsilon =1$. Let $H_\varepsilon(t) = \int_{-\infty}^t
  \delta_\varepsilon(s) ds$. Then $\delta_\varepsilon$ converges to
  the delta-distribution as $\varepsilon\to0$ and $H_\varepsilon$ to
  the Heaviside function. Let our new ansatz be
  \[
  U_\varepsilon(x,t) =
  \frac{\delta_\varepsilon(t-\abs{x-a})}{4\pi\abs{x-a}} +
  H_\varepsilon(t-\abs{x-a}) r^a(x,t).
  \]

  Let's calculate the derivatives of the second term in the ansatz
  next. Note that $\nabla \cdot (x/\abs{x}) = 2/\abs{x}$ in 3D, and so
  setting $R = H_\varepsilon(t-\abs{x-a}) r^a(t,x)$ we have
  \begin{align*}
    \partial_t R &= \delta_\varepsilon(t-\abs{x-a}) r^a +
    H_\varepsilon(t-\abs{x-a}) \partial_t r^a\\ \partial_t^2 R &=
    \delta_\varepsilon'(t-\abs{x-a}) r^a + 2
    \delta_\varepsilon(t-\abs{x-a}) \partial_t r^a +
    H_\varepsilon(t-\abs{x-a}) \partial_t^2 r^a, \\ \nabla R &=
    \delta_\varepsilon(t-\abs{x-a}) \left( -\frac{x-a}{\abs{x-a}}
    \right) r^a + H_\varepsilon(t-\abs{x-a}) \nabla r^a\\ \Delta R &=
    \delta_\varepsilon'(t-\abs{x-a}) r^a -
    \delta_\varepsilon(t-\abs{x-a}) \frac{2r^a}{\abs{x-a}}
    \\ &\phantom{=} - \delta_\varepsilon(t-\abs{x-a}) 2
    \frac{x-a}{\abs{x-a}} \cdot \nabla r^a +
    H_\varepsilon(t-\abs{x-a}) \delta_\varepsilon r^a, \\ q R &=
    H_\varepsilon(t-\abs{x-a}) q r^a.
  \end{align*}
  Take all terms into account next. Then
  \begin{align*}
    &(\partial_t^2 - \Delta - q) U_\varepsilon = (\partial_t^2-\Delta)
    \frac{\delta_\varepsilon(t-\abs{x-a})}{4\pi\abs{x-a}} - \frac{q(x)
      \delta_\varepsilon(t-\abs{x-a})}{4\pi \abs{x-a}} \\ & +
    \delta_\varepsilon'(t-\abs{x-a}) (r^a-r^a) +
    2\frac{\delta_\varepsilon(t-\abs{x-a})}{\abs{x-a}} \left(
    \abs{x-a}\partial_t r^a + r^a + (x-a) \cdot \nabla r^a \right)
    \\ & + H_\varepsilon(t-\abs{x-a}) (\partial_t^2 - \Delta - q)r^a.
  \end{align*}
  As $\varepsilon\to0$ the first term above converges to
  $\delta(x-a,t)$ in the space of distributions. The terms with
  coefficients $\delta_\varepsilon'$ and $H_\varepsilon$ vanish. The
  former trivially, and the latter because our choice of
  $\delta_\varepsilon$ makes sure that $\supp H_\varepsilon \subset
  \R_+$. In other words
  \begin{align*}
    &\lim_{\varepsilon\to0} (\partial_t^2 - \Delta - q)U_\varepsilon -
    \delta(x-a,t) \\ &\qquad = \lim_{\varepsilon\to0}
    2\frac{\delta_\varepsilon(t-\abs{x-a})}{\abs{x-a}} \left(
    \abs{x-a}\partial_t r^a + r^a + (x-a)\cdot\nabla r^a -
    \frac{q(x)}{8\pi} \right)
  \end{align*}
  in $\mathscr D'(\R^3\times\R)$.

  Denote by $f(x,t)$ the continuous function in parenthesis above. Let
  $\varphi\in C^\infty_c(\R^3\times\R)$ be a test function. Then in
  the support of $\varphi$ for every $\mu>0$ there is $\delta>0$ such
  that $\abs{f(x,t)}<\mu$ if $\abs{t-\abs{x-a}}<\delta$. Let
  $2\varepsilon<\delta$. Then
  \[
  \abs{\int_{\R^3\times\R}
    \frac{\delta_\varepsilon(t-\abs{x-a})}{\abs{x-a}} f(x,t)
    \varphi(x,t) dx dt} \leq \mu \norm{\varphi}_\infty \int_{\supp
    \varphi} \frac{\delta_\varepsilon(t-\abs{x-a})}{\abs{x-a}} dx dt
  \]
  and by integrating the $t$-variable first we get the upper bound
  \[
  \ldots \leq \mu \norm{\varphi}_\infty \int_{B(a,R_\varphi)}
  \frac{dx}{\abs{x-a}} = C_\varphi \mu.
  \]
  In other words the remaining term in the expansion for
  $(\partial_t^2-\Delta-q)U_\varepsilon$ tends to zero in the
  distribution sense. Hence
  \[
  (\partial_t^2 - \Delta - q) U_\varepsilon \to \delta(x-a,t)
  \]
  in $\mathscr D'(\R^3\times\R)$. Also, since $\supp
  \delta_\varepsilon \subset \R_+$, it also satisfies the initial
  condition $U_\varepsilon=0$ for $t<0$. Finally, it is easy to see
  that $U_\varepsilon \to U^a$. Hence the latter is a solution to
  \eqref{EQ1}--\eqref{EQ2}.
\end{proof}

\begin{lemma} \label{PSuniqueness}
  For $n\in\N$ let $q\in C^n(\R^3)$ and let $U$ be a distribution of
  order $n$ on $\R^3\times\R$ such that $U=0$ on $t<0$. If
  $(\partial_t^2 - \Delta - q)U=0$ then $U=0$.
\end{lemma}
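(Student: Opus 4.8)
The plan is to reduce uniqueness to an energy/Gr\"onwall argument, exactly as in Lemma \ref{uniqueGoursat}, after regularizing the distribution $U$. First I would fix $T>0$ and show $U=0$ on the slab $\R^3\times(-\infty,T)$; since $T$ is arbitrary this gives the full claim. The first step is to localize: because $U$ is a distribution of order $n$ vanishing for $t<0$, and $q\in C^n$, the product $qU$ makes sense and $(\partial_t^2-\Delta)U = qU$ in $\mathscr D'$. Convolving in all variables with a mollifier $\varphi_\varepsilon$ gives $U_\varepsilon := \varphi_\varepsilon\ast U \in C^\infty(\R^3\times\R)$ solving $(\partial_t^2-\Delta)U_\varepsilon = \varphi_\varepsilon\ast(qU)$, and $U_\varepsilon=0$ for $t<\varepsilon$ say (after shifting the support of $\varphi_\varepsilon$ into $t>0$, as is done in Lemma \ref{goursat2PS}).

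Next I would run the energy estimate. Since $U$ has order $n$, the mollification $U_\varepsilon$ need not be bounded uniformly in $\varepsilon$, so instead of the energy of $U_\varepsilon$ itself I would work with a fixed number of antiderivatives in $t$: set $V_\varepsilon = \partial_t^{-N}U_\varepsilon$ for $N$ large enough (using that $U_\varepsilon$ vanishes for small $t$, so the antiderivative is unambiguous), chosen so that $V_\varepsilon\to V$ in $C^2_{\mathrm{loc}}$ where $V$ is a genuine $C^2$ function; this is possible because a compactly-in-time-supported distribution of finite order becomes a continuous (indeed arbitrarily smooth) function after enough time-antiderivatives. The function $V$ then solves $(\partial_t^2-\Delta)V = \partial_t^{-N}(qU)$ with $V=0$ for $t<0$, but this is not quite a closed equation in $V$. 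To close it I would instead commute: $\partial_t^N$ and $\Delta$ commute with the equation, so it is cleaner to directly apply the $L^2$-energy method of Lemma \ref{uniqueGoursat} on large balls $\abs{x}\leq R$ to the smooth function $U_\varepsilon$, obtaining
\[
E_\varepsilon(t) = \int_{\abs{x}\leq R+T-t}\big(\abs{\partial_t U_\varepsilon}^2 + \abs{\nabla U_\varepsilon}^2 + \abs{U_\varepsilon}^2\big)\,dx
\]
with backward light-cone domains; integration by parts plus the equation gives $E_\varepsilon'(t) \leq (1+\norm{q}_\infty)E_\varepsilon(t) + (\text{error from }\varphi_\varepsilon\ast(qU)-qU_\varepsilon)$, and since $E_\varepsilon(0)=0$, Gr\"onwall yields $E_\varepsilon(t)\le (\text{error})\,e^{(1+\norm q_\infty)t}$. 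As $\varepsilon\to0$ the commutator error $\varphi_\varepsilon\ast(qU)-q(\varphi_\varepsilon\ast U)$ tends to $0$ in the relevant (negative-order) sense, which after the fixed number of antiderivatives becomes $L^2_{\mathrm{loc}}$ convergence, forcing $E(t)=0$ and hence $U=0$ on $\abs{x}\le R$, $0\le t\le T$; letting $R,T\to\infty$ finishes it.

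The main obstacle is bookkeeping the finite order: the energy $E_\varepsilon$ need not converge because $U_\varepsilon$ is not uniformly bounded, so one genuinely has to work with $\partial_t^{-N}U_\varepsilon$ (or equivalently differentiate the equation the other way and track that $U$ agrees with a $C^2$ function after $N$ antiderivatives in $t$) and verify that $\partial_t^{-N}$ of the commutator $\varphi_\varepsilon\ast(qU)-qU_\varepsilon$ converges to zero locally in $L^2$. The cleanest route is: choose $N$ with $N\ge n$, let $W_\varepsilon = \partial_t^{-N}U_\varepsilon$, note $W_\varepsilon\to W$ in $C^1_{\mathrm{loc}}$ for some $C^1$ function $W$ vanishing for $t<0$ with $(\partial_t^2-\Delta)W = G$ where $G=\partial_t^{-N}(qU)$ is at least continuous and supported in $t\ge0$; then Gr\"onwall on $W$ as in Lemma \ref{uniqueGoursat} (now with a source $G$ rather than $qW$, but $G$ is itself controlled by finitely many $t$-antiderivatives of $U$, which we show vanish by downward induction on the number of missing derivatives) gives $W\equiv0$, hence $U=\partial_t^N W=0$. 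I expect the induction on the antiderivative count — peeling off one $\partial_t$ at a time, each time using that the lower-order object already vanishes — to be the one delicate point; everything else is the same integration-by-parts-plus-Gr\"onwall computation already carried out for the Goursat uniqueness lemma.
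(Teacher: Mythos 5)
Your plan diverges from the paper's proof (which is a duality argument, not an energy argument), and as written it has a genuine gap at the step where you upgrade $U$ to a function. The claim that a finite-order distribution vanishing for $t<0$ ``becomes a continuous (indeed arbitrarily smooth) function after enough time-antiderivatives'' is false: taking primitives in $t$ does nothing to the spatial singularities. For example $U=(\partial_{x_1}^{n}f)(x)\otimes H(t)$ with $f$ continuous but nowhere differentiable is a distribution of order $n$ supported in $t\geq 0$, and $\partial_t^{-N}U=(\partial_{x_1}^{n}f)(x)\otimes t_+^{N}/N!$ is still of order $n$ in $x$ for every $N$; the structure theorem only gives a continuous function after primitives in \emph{all} variables. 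Consequently $W_\varepsilon=\partial_t^{-N}U_\varepsilon$ need not converge in $C^1_{\mathrm{loc}}$, nor even in $L^2_{\mathrm{loc}}$, the energy $E_\varepsilon$ has no limit, and the commutator error $\varphi_\varepsilon\ast(qU)-qU_\varepsilon$ cannot be shown to vanish ``in $L^2_{\mathrm{loc}}$ after antiderivatives'' --- that would already presuppose that $\partial_t^{-N}U$ is an $L^2_{\mathrm{loc}}$ function, which is exactly what fails. (A side remark: since $q=q(x)$ is $t$-independent, $\partial_t^{-N}$ \emph{does} commute with multiplication by $q$, so the equation for $W$ is closed; the difficulty is not closure but regularity. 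If instead you take spatial primitives to gain regularity, the commutation with $q(x)$ is lost, and recovering spatial regularity from the equation via $\Delta U=\partial_t^2U-qU$ would require a genuine negative-order Sobolev energy estimate that your outline does not supply.) So the downward induction ``peeling off one $\partial_t$ at a time'' cannot get started.

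The paper sidesteps regularization of $U$ entirely by duality: given an arbitrary test function $\varphi$, choose a past light cone $\abs{x-x_0}\leq t_0-t$ containing its support, reverse time and translate, and use Lemma \ref{initialProblem} (whose solution is $C^{n}$ with the stated support property) to produce $\psi\in C^n$ with $(\partial_t^2-\Delta-q)\psi=\varphi$ and $\psi=0$ outside that past light cone. Since $U$ has order $n$ and vanishes for $t<0$, the pairing $\langle U,\psi\rangle$ is well defined (compact support overlap, $C^n$ test object for an order-$n$ distribution), and
\[
0=\langle(\partial_t^2-\Delta-q)U,\psi\rangle=\langle U,(\partial_t^2-\Delta-q)\psi\rangle=\langle U,\varphi\rangle ,
\]
so $U=0$. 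If you want to salvage an energy-method proof for distributional $U$ of finite order, you would have to work in negative-order Sobolev spaces (or first prove a regularity statement for distributional solutions), which is substantially more machinery than the two-line duality argument; I recommend adopting the adjoint-problem route, which is exactly what Lemma \ref{initialProblem} was set up to enable.
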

\begin{proof}
  Let $\varphi\in C^\infty_c(\R^3\times\R)$ be arbitrary. There is
  $x_0\in\R^3$ and $t_0\in\R$ such that $\varphi(x,t)=0$ in
  $\abs{x-x_0}>t_0-t$, i.e. outside a past light cone. Write $y=x-x_0$
  and $s=t_0-t$, and define
  \[
  Q(y) = q(y+x_0), \qquad F(y,s) = \varphi(y+x_0,t_0-s).
  \]
  Then $Q\in C^n(\R^3)$, $F\in C^\infty_c(\R^3\times\R)$ and
  $F(y,s)=0$ when $s<\abs{y}$. Lemma \ref{initialProblem} gives the
  existence of $w\in C^n(\R^3\times\R)$ which vanishes on $s<\abs{y}$
  and satisfies $(\partial_s^2 - \Delta - Q)w = F$.

  Let
  \[
  \psi(x,t) = w(x-x_0,t_0-t).
  \]
  Then $\psi(x,t)=0$ if $\abs{x-x_0}>t_0-t$. Since $U=0$ for $t<0$,
  the intersection of the supports of $\psi$ and $U$ is a compact
  set. Since $U$ is of order $n$ and $\psi$ is in $C^n$ their
  distribution pairing $\langle U, \psi \rangle$ is well defined. Now
  \begin{align*}
    &\langle (\partial_t^2 - \Delta - q)U, \psi \rangle = \langle U,
    (\partial_t^2 - \Delta - q)\psi \rangle \\ &\qquad = \langle
    \tilde U, (\partial_s^2 - \Delta - Q)w \rangle = \langle \tilde U,
    F \rangle = \langle U, \varphi \rangle
  \end{align*}
  where $\tilde U$ is the distribution $U$ in the
  $(y,s)$-coordinates. Since $U$ is in the kernel of the differential
  operator and $\varphi$ is an arbitrary test function, we have $U=0$.
\end{proof}

\bigskip
\begin{proof}[Proof of Theorem \ref{directProblemOK}]
  Uniqueness follows directly from Lemma \ref{PSuniqueness}. We shall
  build a solution $r^a$ to the Goursat-type problem of Lemma
  \ref{goursat2PS}. We switch boundary conditions as was done at the
  beginning of Section \ref{goursatSect}. Define
  \[
  g(x) = \frac{1}{8\pi} \int_0^1 q\big( a + s(x-a) \big) ds
  \]
  and note that $q\in C^n(\R^3)$, $g\in C^{n+2}(\R^3)$ for $n=5$. The
  well-posedness of the Goursat problem (Theorem
  \ref{goursatWellPosed}) gives a unique $C^1$ solution to
  \begin{alignat*}{2}
    (\partial_t^2 - \Delta - q) r^a &= 0, &\qquad& x\in\R^3,
    t>\abs{x-a},\\ r^a &= g, &\qquad& x\in\R^3, t=\abs{x-a}.
  \end{alignat*}
  It has the required norm estimate for any $T>0$ and in addition it
  satisfies
  \[
  (\partial_t + \partial_r) r^a = \partial_r g
  \]
  on $t=\abs{x-a}$. Here $r=\abs{x-a}$ and furthermore we denote
  $\theta=(x-a)/\abs{x-a}$. If in the definition of $g$ we switch
  integration variables to $s'=rs$ then
  \[
  \partial_r g = -\frac{1}{r} g + \frac{q}{8\pi r}
  \]
  which is well-defined because $q=0$ in a neighbourhood of
  $a$. Recalling that $\partial_r = \theta\cdot\nabla_x$ we see that
  in fact
  \[
  (\abs{x-a}\partial_t + 1 + (x-a)\cdot\nabla_x) r^a = \frac{q}{8\pi}
  \]
  on the boundary $t=\abs{x-a}$. Hence Lemma \ref{goursat2PS} shows
  that $U^a$ is a solution to the point source problem.

  The unperturbed Green's function is supported only on
  $t=\abs{x-a}$. On $t<\abs{x-a}$ the solution vanishes. On
  $t>\abs{x-a}$ it is equal to $r^a$ which is $C^1$. In this topology,
  it depends continuously on $a$ because the Goursat problem depends
  continuously on the potential and characteristic boundary
  data. Hence $U(a,2\tau)$ is well-defined for $\tau>0$ and
  continuously differentiable in $\tau$.

  Let two potentials $q_1$ and $q_2$ and their associated solutions
  $r_1^a$, $r_2^a$ to the Goursat problem be given. For any
  $a\in\partial\DOI$ and $\beta\in\{0,1\}$ Theorem
  \ref{goursatWellPosed} shows the norm estimate
  \[
  \sup_{x\in\R^3}\sup_{0<\tau<1} \abs{\partial_\tau^\beta
    (r_1^a-r_2^a)(x,2\tau)} \leq C_{\mathcal M} \qnorm{q_1-q_2}
  \]
  because $\norm{g_1-g_2}_{C^7(\R^3)}\leq\norm{q_1-q_2}_{C^7(\R^3)}$
  and the norms involved are invariant under translations. Letting
  $x=a$ and then taking the supremum over $a$ proves the claim because
  $U_1^a-U_2^a = r_1^a-r_2^a$ at $(x,t)=(a,2\tau)$.
\end{proof}

\section{Stability of the inverse problem} \label{inverseSect}
Now that the direct problem has been shown to be well-defined,
including the estimates for the point source backscattering
measurements, we can consider the inverse problem. The first step is
to write a boundary identity. The following is proven in \cite{RU2}
for $C^\infty$-smooth potentials, but it works verbatim in our case
too.
\begin{proposition} \label{prop:bndry2inside}
  Let $\DOI=B(\bar0,1)$ be the unit ball in $\R^3$ and $q_1, q_2
  \in \qspace$. Let $a \in \partial\DOI$ and let $U^a_1$ and $U^a_2$
  be given by Theorem \ref{directProblemOK} for $q=q_j$, $j=1,2$. Then
  \begin{equation} \label{bndryIdentity}
    \begin{split}
      U^a_1(a,2\tau) - U^a_2(a,2\tau) = &\frac{1}{32\pi^2\tau^2}
      \int_{\abs{x-a}=\tau} (q_1-q_2)(x) d\sigma(x) \\ &+
      \int_{\abs{x-a}\leq\tau} (q_1-q_2)(x) k(x,\tau,a) d x
    \end{split}
  \end{equation}
  with
  \[
  k(x,\tau,a) = \frac{(r^a_1+r^a_2)(x,2\tau-\abs{x-a})}{4\pi\abs{x-a}}
  + \int_{\abs{x-a}}^{2\tau-\abs{x-a}} r^a_1(x,2\tau-t) r^a_2(x,t) dt
  \]
  if $\abs{x-a}\leq\tau$.

  If we have moreover $\qnorm{q_j} \leq \mathcal M < \infty$ then
  \begin{align}
    &\sup_{h\leq\tau\leq1} \sup_{\abs{a}=1} \int_{\abs{x-a}=\tau}
    \abs{k(x,\tau,a)}^2 d\sigma(x) \leq C_{\mathcal M,h,\DOI} <
    \infty,
    \label{kEst1} \\ &\sup_{h\leq\tau\leq1} \sup_{\abs{a}=1}
    \int_{h\leq\abs{x-a}\leq\tau} \abs{\partial_\tau (\tau
      k(x,\tau,a))}^2 d\sigma(x) \leq C_{\mathcal M,h,\DOI} <
    \infty \label{kEst2}
  \end{align}
  for any $h>0$. Note that $k(x,\tau,a)$ is singular at $x=a$.
\end{proposition}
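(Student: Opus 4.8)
The plan is to follow the Rakesh--Uhlmann derivation of \cite{RU2}, which is written there for $C^\infty$ potentials but uses nothing about the scattered waves beyond their $C^1$ regularity, and then to read the estimates off the resulting formula for $k$; the $C^1$ bound for the $r^a_j$, uniform in $a\in\partial\DOI$, is exactly what Theorem \ref{directProblemOK} supplies. Concretely, write $u_j=U^a_j$ and $P_j=\partial_t^2-\Delta-q_j$. Since $P_2u_1=\delta(x-a,t)+(q_1-q_2)u_1$ and $P_2u_2=\delta(x-a,t)$, the difference solves $P_2(u_1-u_2)=(q_1-q_2)u_1$ with $u_1-u_2=0$ for $t<0$, so Duhamel's principle for $P_2$ represents $u_1-u_2$ as a forward solution operator applied to $(q_1-q_2)u_1$. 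Pairing with $\delta(x-a,t-2\tau)$ and transposing this operator to its adjoint, the backward solution operator, introduces the distribution $V_2$ solving $P_2V_2=\delta(x-a,t-2\tau)$, $V_2=0$ for $t>2\tau$; because $q_2$ is independent of $t$, time reversal gives $V_2(x,t)=u_2(x,2\tau-t)$. Hence
\[
(U^a_1-U^a_2)(a,2\tau)=\int_{\R^3}(q_1-q_2)(x)\Big(\int_{\R} u_1(x,t)\,u_2(x,2\tau-t)\,dt\Big)dx .
\]
Inserting the ansatz $u_j=\frac{\delta(t-\abs{x-a})}{4\pi\abs{x-a}}+H(t-\abs{x-a})r^a_j$ and expanding the $t$-integral into its four terms, the $\delta\cdot\delta$ term collapses through $\delta(2\tau-2\abs{x-a})=\frac12\delta(\tau-\abs{x-a})$ to the spherical integral $\frac{1}{32\pi^2\tau^2}\int_{\abs{x-a}=\tau}(q_1-q_2)\,d\sigma$, the two $\delta\cdot H$ terms give $\frac{(r^a_1+r^a_2)(x,2\tau-\abs{x-a})}{4\pi\abs{x-a}}$, and the $H\cdot H$ term gives $\int_{\abs{x-a}}^{2\tau-\abs{x-a}}r^a_1(x,2\tau-t)r^a_2(x,t)\,dt$; this is precisely \eqref{bndryIdentity} with the stated $k$.

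The only step that is more than bookkeeping is justifying the product $u_1(x,t)\,u_2(x,2\tau-t)$: the leading terms of $u_1$ and of the time-reversed $u_2$ are conormal distributions along the two light cones $\{t=\abs{x-a}\}$ and $\{t=2\tau-\abs{x-a}\}$ issuing from $a$, which meet transversally at every point with $\abs{x-a}>0$. Since $q_1-q_2$ is supported at distance $\ge h$ from $\partial\DOI\ni a$, only such points enter the integral, so the product, the pairing above, and the contraction of the $\delta\cdot\delta$ term are all well defined; equivalently one can replace $u_2$ by a mollified wave exactly as in the proof of Lemma \ref{goursat2PS} and let the mollification parameter tend to zero, every intermediate identity then reducing to an elementary Fubini computation. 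This is the point for which $C^1$ regularity of the $r^a_j$ is enough, so the argument of \cite{RU2} carries over unchanged.

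For the estimates, use that by Theorem \ref{directProblemOK} one has $\norm{r^a_j}_{C^1(\R^3\times[0,2])}\le C_{\mathcal M}$ uniformly in $a\in\partial\DOI$. On $\{\abs{x-a}=\tau\}$ the integral term in $k$ is empty, so there $k=\frac{(r^a_1+r^a_2)(x,\tau)}{4\pi\tau}$, and with $\tau\ge h$ this gives $\abs{k}\le C_{\mathcal M}/(2\pi h)$; integrating $\abs{k}^2$ over a sphere of radius $\tau\le1$ yields \eqref{kEst1}. For \eqref{kEst2} differentiate $\tau k$ in $\tau$ by Leibniz's rule — legitimate since $r^a_j\in C^1(\R^3\times\R)$ — which produces finitely many terms, each a bounded $C^1$-quantity of the $r^a_j$ evaluated on $\R^3\times[0,2]$ times either $1/\abs{x-a}$ or an integration length at most $2$; on the support of $q_1-q_2$ one has $1/\abs{x-a}\le1/h$, so every term is bounded by $C_{\mathcal M,h}$ and integrates to something finite over the bounded region in question. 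No cancellation is needed, so once the distributional product of the previous paragraph is in place — which I regard as the only real obstacle, and which is handled either by citing \cite{RU2} or by the mollification already used in Lemma \ref{goursat2PS} — both estimates are routine.
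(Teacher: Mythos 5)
Your proposal is correct and follows essentially the same route as the paper: the paper defers the identity to Section 3.2 of \cite{RU2}, describing it exactly as computing $\int\!\!\int (q_1-q_2)(x)\,U^a_2(x,t)\,U^a_1(x,2\tau-t)\,dx\,dt$ once by integration by parts and once via the expansion \eqref{ansatz}, and obtains \eqref{kEst1}--\eqref{kEst2} directly from the $C^1$ bound \eqref{2ndTermEstimates}, which is precisely your Duhamel/time-reversal pairing, four-term expansion, and Leibniz estimate. You merely write out the details (including the justification of the distributional products) that the paper cites rather than reproduces.
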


\begin{proof}
  We shall skip the proof of the identities as they have been proved
  in Section 3.2 of \cite{RU2}. It is a matter of calculating
  \[
  \int_{-\infty}^\infty \int_{\R^n} (q_1-q_2)(x) U^a_2(x,t)
  U^a_1(x,2\tau-t) dx dt
  \]
  on one hand by integrating by parts, and on the other hand by using
  the expansion \eqref{ansatz}. The estimates for $k$ follow directly
  from \eqref{2ndTermEstimates}.
\end{proof}

Our next step is an integral identity related to the first term in
\eqref{bndryIdentity}. The proof for the estimate for $E(a,\tau)$ can
be dug from the proofs in \cite{RU2}. We prove it again here, both for
clarity, since this estimate might be of interest on its own, and for
having an explicit form for the constant in front of the sum.

\begin{proposition} \label{prop:differentiate}
  Let $Q\in C^1_c(\DOI)$ with $\DOI$ the unit disc in $\R^3$. Then for
  all $a\in\partial\DOI$ and $0<\tau<\abs{a}$ we have
  \begin{equation} \label{differentiate}
    \partial_\tau \left( \frac{\tau}{4\pi\tau^2} \int_{\abs{x-a}=\tau}
    Q(s) d\sigma(x) \right) = \frac{1-\tau}{2}Q\big((1-\tau)a\big) +
    E(a,\tau)
  \end{equation}
  where
  \[
  \abs{E(a,\tau)}^2 \leq \frac{3}{\pi(1-\tau)} \sum_{i<j}
  \int_{\abs{x-a}=\tau}
  \frac{\abs{\Omega_{ij}Q(x)}^2}{\sqrt{\abs{x}-(1-\tau)}} d\sigma(x).
  \]
  Here the $\Omega_{ij}$ are the angular derivatives $x_i\partial_j -
  x_j\partial_i$ depicted as vector fields in
  Figure~\ref{fig:vectorfields}.
  \begin{figure}
  \begin{center}
    \includegraphics{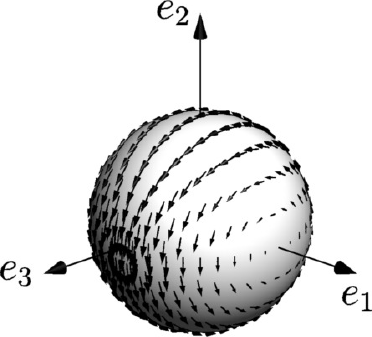} \includegraphics{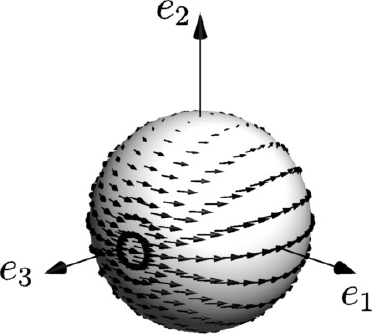}
    \includegraphics{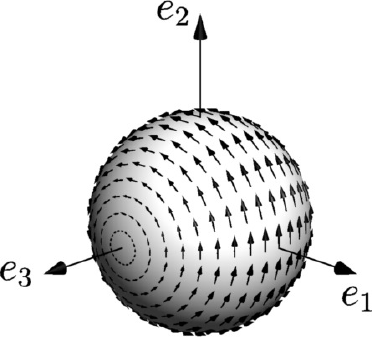}
    \caption{Angular derivatives $\Omega_{ij}$}
    \label{fig:vectorfields}
  \end{center}
  \end{figure}
\end{proposition}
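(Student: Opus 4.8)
The plan is to pass to coordinates on the sphere $\{\abs{x-a}=\tau\}$ in which the $\tau$-derivative of the spherical average sees only angular derivatives of $Q$. First I would write $M(a,\tau) := \frac{1}{4\pi\tau}\int_{\abs{x-a}=\tau}Q(x)\,d\sigma(x)$ for the quantity being differentiated, parametrize $x = a + \tau\omega$ with $\omega\in S^2$, and describe $\omega$ by the polar angle $\psi\in[0,\pi]$ measured from $-a$ and an azimuth $\phi$ around the axis $\R a$, so that $x = X(\psi,\phi,\tau) := (1-\tau\cos\psi)a + \tau\sin\psi\,e(\phi)$ with $e(\phi)\perp a$ a unit vector. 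Since $\abs{X}^2 = 1+\tau^2-2\tau\cos\psi$, I would substitute $\psi\mapsto\rho := \abs{X}$, i.e. $\cos\psi = (1+\tau^2-\rho^2)/(2\tau)$, which turns $\sin\psi\,d\psi$ into $(\rho/\tau)\,d\rho$ and $d\sigma(x)$ into $\tau\rho\,d\rho\,d\phi$. This gives
\[
M(a,\tau) = \frac{1}{4\pi}\int_{1-\tau}^{1}\rho\,\Phi(\rho,\tau)\,d\rho,\qquad \Phi(\rho,\tau) := \int_0^{2\pi}Q\big(X(\rho,\phi,\tau)\big)\,d\phi,
\]
the upper limit being $1$ since $\supp Q$ is a compact subset of $B$, so $\Phi(\rho,\tau)=0$ for $\rho$ near $1$. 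The point that makes everything work is that in these coordinates $\abs{X(\rho,\phi,\tau)} = \rho$ for \emph{every} $\tau$: varying $\tau$ at fixed $(\rho,\phi)$ slides the point tangentially along the origin-centered sphere of radius $\rho$. Also $X(1-\tau,\phi,\tau) = (1-\tau)a$ independently of $\phi$, so $\Phi(1-\tau,\tau) = 2\pi Q\big((1-\tau)a\big)$.

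Next I would differentiate in $\tau$. As $M$ is visibly $C^1$ in $\tau$ (it equals $\tfrac{\tau}{4\pi}\int_{S^2}Q(a+\tau\omega)\,d\omega$ with $Q\in C^1$), the Leibniz rule applied to the displayed integral produces a boundary contribution from the lower limit plus the interior term:
\[
\partial_\tau M(a,\tau) = \frac{(1-\tau)\Phi(1-\tau,\tau)}{4\pi} + \frac{1}{4\pi}\int_{1-\tau}^{1}\rho\,\partial_\tau\Phi(\rho,\tau)\,d\rho = \frac{1-\tau}{2}Q\big((1-\tau)a\big) + E(a,\tau),
\]
which already yields the asserted main term and pins down $E(a,\tau) = \tfrac{1}{4\pi}\int_{1-\tau}^1\rho\,\partial_\tau\Phi\,d\rho$.

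To estimate $E$ I would exploit that $\abs{X}=\rho$ is constant in $\tau$, so $X\cdot\partial_\tau X = 0$ and hence $\nabla Q(X)\cdot\partial_\tau X = \nabla_{\mathrm{ang}}Q(X)\cdot\partial_\tau X$, where $\nabla_{\mathrm{ang}}$ is the spherical part of the gradient. Using $\abs{\nabla_{\mathrm{ang}}Q(x)}^2 = \abs{x}^{-2}\sum_{i<j}\abs{\Omega_{ij}Q(x)}^2$ together with the elementary identity $\abs{\partial_\tau X} = \tau\rho/r$, where $r = \tau\sin\psi$ is the distance from $X$ to the axis $\R a$, Cauchy--Schwarz in $\phi$ gives $\abs{\partial_\tau\Phi(\rho,\tau)} \leq \int_0^{2\pi}\tfrac{\tau}{r}\big(\sum_{i<j}\abs{\Omega_{ij}Q(X)}^2\big)^{1/2}\,d\phi$. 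Converting $\rho\,d\rho\,d\phi = \tau^{-1}d\sigma(x)$ back to the surface, this becomes $\abs{E(a,\tau)} \leq \tfrac{1}{4\pi}\int_{\abs{x-a}=\tau}r(x)^{-1}\big(\sum_{i<j}\abs{\Omega_{ij}Q(x)}^2\big)^{1/2}\,d\sigma(x)$. I would then factor $r^2 = (\abs{x}-(1-\tau))\,h(x)$ with $h(x) = \tfrac14(1+\tau-\abs{x})(1+\abs{x}-\tau)(1+\abs{x}+\tau)$, note $h \geq \tau(1-\tau)$ on $\supp Q$ (there $1-\tau\leq\abs{x}\leq 1$), apply Cauchy--Schwarz on the sphere with $f^2 = \big(\sum_{i<j}\abs{\Omega_{ij}Q}^2\big)/\sqrt{\abs{x}-(1-\tau)}$ and $g^2 = \big[(\abs{x}-(1-\tau))^{1/2}h\big]^{-1}$, and evaluate $\int g^2\,d\sigma$, whose radial part is $\int_{1-\tau}^1(\rho-(1-\tau))^{-1/2}\,d\rho = 2\sqrt\tau$. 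This gives $\abs{E(a,\tau)}^2 \leq \tfrac{\sqrt\tau}{4\pi(1-\tau)}\sum_{i<j}\int_{\abs{x-a}=\tau}\abs{\Omega_{ij}Q(x)}^2/\sqrt{\abs{x}-(1-\tau)}\,d\sigma(x)$, which is at least as strong as the stated bound since $\sqrt\tau/(4\pi)\leq 3/\pi$.

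The hard part --- really the only delicate part --- is the bookkeeping near the degenerate circle $\rho = 1-\tau$, i.e. near the deepest point $(1-\tau)a$ of the sphere inside $B$: there the substitution $\psi\mapsto\rho$ is singular, $\partial_\tau X$ blows up like $r^{-1}$, and one must justify both the differentiation under the integral sign and the interchange of the orders of integration. This is harmless because every offending factor is $O\big((\rho-(1-\tau))^{-1/2}\big)$, which is integrable in $\rho$; concretely I would either cut off a neighbourhood of $\rho = 1-\tau$ and pass to the limit using this dominating function, or make the substitution $\rho = (1-\tau)+\sigma^2$, after which all integrands are bounded and the Leibniz computation is completely routine. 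Everything else is a finite computation.
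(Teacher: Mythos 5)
Your proposal is correct, and it differs from the paper mainly in being self-contained: the paper obtains the identity \eqref{differentiate} by citing Proposition~2.1 of \cite{RU2} (after a density reduction to $Q\in C^\infty_c$), with the error written as $\frac{1}{4\pi}\int_{\abs{x-a}=\tau}\frac{\alpha\cdot\nabla Q}{\sin\phi}\,d\sigma$, and only then estimates $E$; you instead derive the identity directly from the $(\rho,\phi)$-parametrization of the sphere $\{\abs{x-a}=\tau\}$ and the Leibniz rule, exploiting that $\abs{X(\rho,\phi,\tau)}=\rho$ is independent of $\tau$, which also lets you work with $Q\in C^1$ without approximation. The subsequent estimation of $E$ is essentially the paper's argument in disguise: your $1/r(x)$ is exactly the paper's $1/(\abs{x}\abs{\sin\phi})$, your Heron factorization $r^2=(\abs{x}-(1-\tau))h$ is the paper's law-of-cosines factorization of $\sin^2\phi$, and the Cauchy--Schwarz split against the weight $(\abs{x}-(1-\tau))^{-1/2}$ together with the radial integral $\int_{1-\tau}^1(\rho-(1-\tau))^{-1/2}d\rho$ reproduces the paper's computation of $I(a,\tau)$ in the same coordinates $d\sigma=\tau\rho\,d\rho\,d\phi$. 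Two small dividends of your route: using $\ell^2$ Cauchy--Schwarz on the frame $\{T_{ij}/\abs{x}\}$ directly (instead of the paper's $\abs{\alpha\cdot\nabla Q}\leq\abs{x}^{-1}\sum\abs{\Omega_{ij}Q}$ followed by $(\sum f_{ij})^2\leq3\sum f_{ij}^2$) yields the sharper constant $\sqrt{\tau}/(4\pi(1-\tau))$ in place of $3/(\pi(1-\tau))$, which of course implies the stated bound; and no external reference is needed. The only step requiring genuine care is the one you flag: differentiating the integral with moving lower limit $\rho=1-\tau$ when $\partial_\tau\Phi$ blows up like $(\rho-(1-\tau))^{-1/2}$ there. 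Your proposed fixes both work; in particular, under $\rho=(1-\tau)+\sigma^2$ the Jacobian $2\sigma$ cancels the $1/r\sim\sigma^{-1}$ singularity of both $\partial_\tau\Phi$ and $\partial_\rho\Phi$, the lower limit becomes fixed, and the boundary term $(1-\tau)\Phi(1-\tau,\tau)=2\pi(1-\tau)Q((1-\tau)a)$ re-emerges from integrating $-\partial_\rho(\rho\Phi)$, so the main term is recovered rigorously.
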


\begin{proof}
  We may prove the proposition for $Q\in C^\infty_c(B)$ and then get
  the claim by approximating. Test functions are dense in
  $C^1_c(\DOI)$ and $\sup\abs{f} + \sup\abs{\nabla f} \leq C
  \norm{f}_{C^1}$. By Proposition 2.1 in \cite{RU2}
  \[
  \partial_\tau \left( \frac{\tau}{4\pi\tau^2} \int_{\abs{x-a}=\tau}
  Q(s) d\sigma(x) \right) = \frac{1-\tau}{2}Q\big((1-\tau)a\big) +
  \frac{1}{4\pi} \int_{\abs{x-a}=\tau} \frac{\alpha\cdot\nabla
    Q(x)}{\sin\phi} d\sigma(x),
  \]
  where $\alpha = \alpha(a,x)$ is a unit vector orthogonal to $x$ and
  $\phi$ is the angle at the origin between $x$ and $a$.
  \begin{figure}
  \begin{center}
    \includegraphics{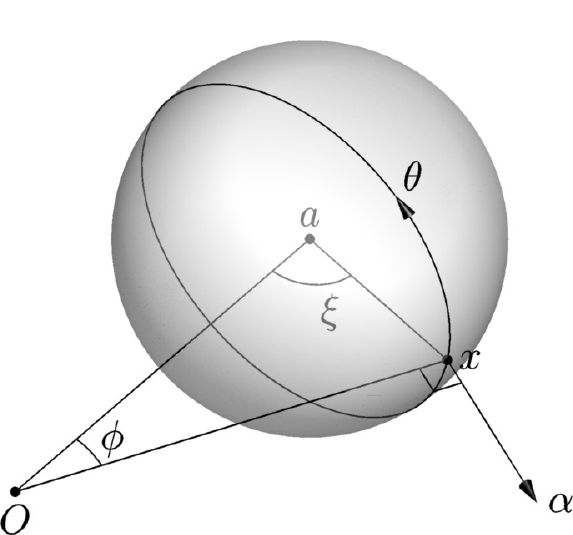}
    \caption{Reparametrization of $\abs{x-a}=\tau$}
    \label{fig:variables}
  \end{center}
  \end{figure}

  Let $T_{ij} = x_ie_j - x_je_i$ so $\Omega_{ij} = T_{ij} \cdot
  \nabla$. Then for any vector $v$ we have
  \[
  v = \sum_{i<j} \left( v \cdot \frac{T_{ij}}{\abs{x}} \right)
  \frac{T_{ij}}{\abs{x}} + \left(v\cdot \frac{x}{\abs{x}}\right)
  \frac{x}{\abs{x}}.
  \]
  On $\abs{x-a}=\tau$ set $v := \alpha$ and then take the dot product
  with $\nabla Q(x)$. We get
  \[
  \abs{x}^2 \alpha\cdot\nabla Q(x) = \sum_{i<j} (\alpha\cdot
  T_{ij})(T_{ij}\cdot\nabla Q)(x) = \sum_{i<j} (\alpha\cdot T_{ij})
  \Omega_{ij}Q (x)
  \]
  since $x\perp\alpha$. By the Cauchy-Schwarz inequality
  \[
  \abs{\alpha\cdot\nabla Q(x)} \leq \frac{\abs{a}}{\abs{x}} \sum_{i<j}
  \abs{\Omega_{ij}Q(x)}
  \]
  since $\abs{T_{ij}} \leq \abs{x}$. This implies
  \[
  \abs{E(a,\tau)} \leq \frac{\abs{a}}{4\pi} \sum_{i<j}
  \int_{\abs{x-a}=\tau}
  \frac{\abs{\Omega_{ij}Q(x)}}{\abs{x}\abs{\sin\phi}} d\sigma(x).
  \]

  The law of cosines gives us $2\abs{a}\abs{x}\cos\phi = \abs{a}^2 +
  \abs{x}^2 - \tau^2$. Solve for $\cos\phi$ to get $\sin\phi =
  \pm\sqrt{1-\cos^2\phi}$ and hence
  \begin{align*}
    &\frac{1}{\abs{\sin\phi}} =
    \frac{2\abs{a}\abs{x}}{\sqrt{4\abs{a}^2\abs{x}^2 - (\abs{a}^2 +
        \abs{x}^2 - \tau^2)^2}} \\ &\qquad =
    \frac{2\abs{a}\abs{x}}{\sqrt{(\abs{x}-\tau+\abs{a})
        (\abs{x}+\tau-\abs{a}) (\tau+\abs{a}-\abs{x})
        (\tau+\abs{a}+\abs{x})}}.
  \end{align*}
  But note that by assumption $\abs{a} > \tau > 0$ and $\abs{a} >
  \abs{x}$ for all $x\in\DOI$. Hence 
  \[
  \frac{1}{\abs{\sin\phi}} \leq
  \frac{2\abs{a}\abs{x}}{\sqrt{\abs{a}-\tau}
    \sqrt{\abs{x}-(\abs{a}-\tau)} \sqrt{\tau} \sqrt{\abs{a}}}.
  \]
  and we can continue with
  \[
  \abs{E(a,\tau)} \leq \frac{\abs{a}^2}{2\pi \sqrt{\tau\abs{a}}
    \sqrt{\abs{a}-\tau}} \sum_{i<j} \int_{\abs{x-a}=\tau}
  \frac{\abs{\Omega_{ij}Q(x)}}{\sqrt{\abs{x}-(\abs{a}-\tau)}}
  d\sigma(x).
  \]

  Finally, use the Cauchy-Schwarz inequality twice: once for
  $(\sum_{i<j}f_{ij})^2 \leq 3 \sum_{i<j} f_{ij}^2$ and a second time
  for the product of the two function
  $\abs{\Omega_{ij}Q(x)}/(\abs{x}-(\abs{a}-\tau))^{1/4}$ and
  $(\abs{x}-(\abs{a}-\tau))^{-1/4}$. It gives
  \[
  \abs{E(a,\tau)}^2 \leq \frac{3\abs{a}^3 I(a,\tau)}{4\pi^2 \tau
    (\abs{a}-\tau)} \sum_{i<j} \int_{\abs{x-a}=\tau}
  \frac{\abs{\Omega_{ij}Q(x)}^2}{\sqrt{\abs{x}-(\abs{a}-\tau)}}
  d\sigma(x)
  \]
  where $I(a,\tau) = \int_{\abs{x-a}=\tau, \abs{x}\leq\abs{a}}
  d\sigma(x) / \sqrt{\abs{x}-(\abs{a}-\tau)}$.
  
  Parametrize the sphere $\abs{a-x}=\tau$ by $\rho=\abs{x}$ and the
  azimuth $\theta\in{[{0,2\pi}]}$ to calculate $I(a,\tau)$. The latter
  variable gives the inclination of the plane $aOx$ with respect to a
  fixed reference plane passing through $O$ and $a$. See Figure
  \ref{fig:variables}. We also introduce the polar angle $\xi$.  Using
  the standard spherical coordinates $\xi$, $\theta$ we have
  \[
  d\sigma(x) = \tau^2 \sin\xi d\xi d\theta = \tau^2 \sin\xi
  \frac{d\xi}{d\rho} d\rho d\theta.
  \]
  By the law of cosines $\abs{a}^2 + \tau^2 - 2\abs{a}\tau\cos\xi =
  \rho^2$. Solve for $\cos\xi$ and differentiate this with respect to
  the variable $\rho$. Note that $a,\tau$ are constants, but
  $\xi=\xi(\rho)$. We get
  \[
  -\sin\xi\frac{d\xi}{d\rho} = \frac{d}{d\rho}\cos\xi = -
  \frac{\rho}{\abs{a}\tau}
  \]
  which implies that $d\sigma(x) = \tau \abs{a}^{-1} \rho d\rho
  d\theta$.

  Thus, since $Q$ vanishes outside $\DOI$, we have
  \[
  I(a,\tau) = \int_0^{2\pi} \int_{\abs{a}-\tau}^{\abs{a}} \frac{\tau
    \abs{a}^{-1} \rho d\rho d\theta}{\sqrt{\rho - (\abs{a}-\tau)}}
  \leq 2\pi \tau \int_0^\tau \frac{d\rho}{\sqrt{\rho}} =
  4\pi\tau^{3/2} \leq 4\pi\tau\sqrt{\abs{a}}.
  \]
  Finally use the fact that $\DOI$ is the unit ball and thus
  $\abs{a}=1$ to conclude the claim.  
\end{proof}

We are now ready to prove stability for point source backscattering.

\begin{proof}[Proof of Theorem \ref{inverseThm}]
  Write $\tilde U^a = U_1^a-U_2^a$ and $\tilde q=q_1-q_2$. By the
  assumptions and Proposition \ref{prop:bndry2inside} we have
  \[
  \tau \tilde U^a(a,2\tau) = \frac{\tau}{32\pi^2\tau^2}
  \int_{\abs{x-a}=\tau} \tilde q(x) d\sigma(x) +
  \int_{\abs{x-a}\leq\tau} \tilde q(x) \tau k(x,\tau,a) dx
  \]
  for any $\tau>0$, in particular for $h<\tau<1$ which we shall assume
  now. By Proposition \ref{prop:differentiate} and the differentiation
  formula for moving regions (e.g. \cite{Evans} Appendix C.4) we get
  \begin{align*}
  &\partial_\tau \left( \tau \tilde U^a(a,2\tau) \right) =
    \frac{1-\tau}{8} \tilde q\big((1-\tau)a\big) + \frac{1}{4}
    E(a,\tau) \\ &\qquad + \int_{\abs{x-a}=\tau} \tilde q(x) \tau
    k(x,t,a) d\sigma(x) + \int_{\abs{x-a}\leq\tau} \tilde q(x)
    \partial_\tau (\tau k(x,\tau,a)) dx.
  \end{align*}

  By the Cauchy--Schwarz inequalities of $\R^4$ and the $L^2$-based
  function spaces $L^2(\{\abs{x-a}=\tau\})$ and
  $L^2(\{\abs{x-a}\leq\tau\})$ we have
  \begin{align*}
    &(1-\tau)^2 \abs{\tilde q\big((1-\tau)a\big)}^2 \leq 256
    \abs{\partial_\tau \left( \tau \tilde U^a(a,2\tau) \right)}^2 + 16
    \abs{E(a,\tau)}^2 \\ &\qquad + 256 \int_{\abs{x-a}=\tau}
    \abs{\tilde q(x)}^2 d\sigma(x) \int_{\supp \tilde q \cap
      \abs{x-a}=\tau} \abs{ \tau k(x,\tau,a) }^2 d\sigma(x) \\ &\qquad
    + 256 \int_{\abs{x-a}\leq\tau} \abs{ \tilde q(x)}^2 dx
    \int_{\supp\tilde q \cap \abs{x-a} \leq \tau} \abs{ \partial_\tau
      (\tau k(x,\tau,a)) }^2 dx
  \end{align*}
  Note that $q_1(x)=q_2(x)=0$ for $\abs{x-a}<h$. Also recall the
  estimates \eqref{kEst1} and \eqref{kEst2} for integrals of $k$ from
  Proposition \ref{prop:bndry2inside}. We can proceed then with
  \begin{align*}
  &(1-\tau)^2 \abs{\tilde q\big((1-\tau)a\big)}^2 \leq C_{M,h,\DOI}
    \Big( \abs{\partial_\tau \left( \tau \tilde U^a(a,2\tau)
      \right)}^2 + \abs{E(a,\tau)}^2 \\ &\qquad +
    \int_{\abs{x-a}=\tau} \abs{\tilde q(x)}^2 d\sigma(x) +
    \int_{\abs{x-a}\leq\tau} \abs{ \tilde q(x)}^2 dx \Big)
  \end{align*}
  since $\qnorm{q_1}, \qnorm{q_2} \leq \mathcal M$.

  Integrate the above estimate with $\int_{a\in\partial\DOI} \ldots
  d\sigma(a)$ and use the coordinate change of Lemma
  \ref{integrationChangeOfCoords}. Then write $\mathcal Q(r) =
  \int_{\abs{x}=r} \abs{\tilde q(x)}^2 d\sigma(x)$ and scale the
  integration variable on the left-hand side to get
  \begin{align}
    &\frac{\mathcal Q(1-\tau)}{C_{\mathcal M,h,\DOI}} \leq
    \int_{\abs{a}=1} \abs{ \partial_\tau(\tilde U^a(a,2\tau) ) }^2
    d\sigma(a) + \int_{\abs{a}=1} \abs{E(a,\tau)}^2 d\sigma(a) \notag
    \\ &\qquad\quad + \pi \int_{\abs{x}\geq 1 - \tau}
    \abs{\tilde{q}(x)}^2 \frac{\tau^2 + 2\tau -
      (1-\abs{x})^2}{\abs{x}} dx. \label{estimateAll}
  \end{align}

  Next, estimate $\abs{E(a,\tau)}^2$ using Proposition
  \ref{prop:differentiate}. Then change the order of integration using
  Lemma \ref{integrationChangeOfCoords}, switch to angular
  coordinates, and apply angular control \eqref{angularControl} to get
  \begin{align}
    &\int_{\abs{a}=1} \abs{E(a,\tau)}^2 d\sigma(a) \leq
    \frac{6\tau}{1-\tau} \sum_{i<j} \int_{\abs{x}\geq 1-\tau}
    \frac{\abs{\Omega_{ij} \tilde q(x)}}{\abs{x}
      \sqrt{\abs{x}-(1-\tau)}} d\sigma(x) \notag \\ &\qquad = \frac{6
      \tau}{1-\tau} \sum_{i<j} \int_{1-\tau}^1 \int_{\abs{x}=r}
    \frac{\abs{\Omega_{ij} \tilde q(x)}}{r \sqrt{r-(1-\tau)}}
    d\sigma(x) dr \notag \\ &\qquad \leq 6 S^2 \int_{1-\tau}^1
    \frac{\tau}{1-\tau} \frac{\mathcal Q(r)}{r\sqrt{r-(1-\tau)}}
    dr. \label{Eestim}
  \end{align}
  Similarly, the last term in \eqref{estimateAll} can be written as
  \begin{equation} \label{lastEstim}
    \ldots = \pi \int_{1-\tau}^1 \frac{\tau^2 + 2\tau - (1-r)^2}{r}
    \mathcal Q(r) dr.
  \end{equation}
  Finally, combine estimates \eqref{Eestim} and \eqref{lastEstim} to
  change \eqref{estimateAll} into
  \begin{align*}
    & \mathcal Q(1-\tau) \leq C_{\mathcal M,h,\DOI} \int_{\abs{a}=1}
    \abs{\partial_\tau (\tau \tilde U^a(a,2\tau))}^2 d\sigma(a)
    \\ &\qquad + C_{\mathcal M,h,\DOI} \int_{1-\tau}^1 \left( \frac{6
      S^2 \tau}{(1-\tau) r \sqrt{r-(1-\tau)}} + \pi \frac{\tau^2 +
      2\tau - (1-r)^2}{r} \right) \mathcal Q(r) dr
  \end{align*}
  which is valid for $0<\tau<1$.

  \bigskip
  Our next step is to prepare for Gr\"onwall's inequality. The
  inequality above can be written as
  \begin{equation} \label{almostGronwall}
    \varphi(\tau) \leq d(\tau) + \int_0^\tau \beta(\tau, s) \varphi(s)
    ds
  \end{equation}
  for $0<\tau<1$ where
  \[
  \varphi(\tau) = \mathcal Q(1-\tau), \qquad d(\tau) = C_{\mathcal
    M,h,\DOI} \int_{\abs{a}=1} \abs{\partial_\tau (\tau \tilde
    U^a(a,2\tau))}^2 d\sigma(a)
  \]
  and
  \[
  \beta(\tau, s) = C_{\mathcal M,h,\DOI} \left( \frac{6 S^2
    \tau}{(1-\tau) (1-s) \sqrt{\tau-s}} + \pi \frac{\tau^2 + 2\tau -
    s^2}{1-s} \right).
  \]
  Because of the singularities of $\beta$ we restrict
  \eqref{almostGronwall} to $0 < \tau \leq 1-\varepsilon$ for any
  given $\varepsilon>0$. We have $1-s \geq 1-\tau \geq \varepsilon >
  0$ and $\tau \leq 1$. In this situation we see easily that
  \[
  \beta(\tau,s) \leq \frac{6 C_{\mathcal M,h,\DOI} S^2}{\varepsilon^2
    \sqrt{\tau-s}} + \frac{3\pi C_{\mathcal
      M,h,\DOI}}{\sqrt{\varepsilon}\sqrt{\tau-s}} \leq \frac{6 S^2 +
    3\pi}{\varepsilon^2} \frac{C_{\mathcal M,h,\DOI}}{\sqrt{\tau-s}}.
  \]
  Denote $C_{S,\mathcal M,h,\DOI} = (6S^2 + 3\pi) C_{\mathcal
    M,h,\DOI}$.

  An application of Gr\"onwall's inequality (Lemma
  \ref{gronwallLemma}) implies
  \begin{equation} \label{finalProofEstimate}
    \varphi(\tau) \leq \left(1 + 2 C_{S,\mathcal M,h,\DOI}
    \varepsilon^{-2} \right) \sup_{0<\tau_0<1} d(\tau_0) \exp\left( 4
    C_{S,\mathcal M,h,\DOI}^2 \varepsilon^{-4} \tau \right)
  \end{equation}
  for $0 < \tau \leq 1-\varepsilon$. Now, given any $\tau \in (0,1)$
  we choose $\varepsilon>0$ such that $\tau \leq 1-\varepsilon$ and
  the right-hand side of the estimate above is minimized. These
  conditions are satisfied for $\varepsilon = 1-\tau$. The claim
  \eqref{condStab} follows after recalling that $\varphi(\tau) =
  \int_{\abs{x}=1-\tau} \abs{(q_1-q_2)(x)}^2 d\sigma(x)$ and applying
  simple estimates.

  \medskip
  Let us prove the norm estimate for $\tilde q = q_1-q_2$ over the
  whole $\DOI$ next. Rewrite \eqref{condStab} as
  \[
  \norm{\tilde q}_{L^2(\{\abs{x}=r\})} \leq \Lambda e^{\mathfrak
    C/r^4}
  \]
  where $\Lambda = \norm{U_1^a-U_2^a}$. Since $\qspace \hookrightarrow
  W^{1,\infty}(\DOI)$ and the potentials are supported in $\DOI$ we
  have the Lipschitz-norm estimate $\abs{\tilde q(x)} \leq \lvert{
    \tilde q(x+\ell \frac{x}{\abs{x}}) \rvert} + 2 \ell \mathcal M$
  for any $\ell\geq0$. Integration gives
  \[
  \norm{\tilde q}_{L^2(\{\abs{x}=r\})} \leq 2 \sqrt{4\pi} \mathcal M r
  \ell + \frac{r}{r+\ell} \Lambda e^{\mathfrak C/(r+\ell)^4}
  \]
  which we can estimate to
  \[
  \norm{\tilde q}_{L^2(\{\abs{x}=r\})} \leq 2 \sqrt{4\pi} \mathcal M
  \ell + \Lambda e^{\mathfrak C/\ell^4}
  \]
  because $0\leq r\leq 1$ and $\ell \geq 0$. The full domain estimate
  \eqref{condStabOrigin} follows from Lemma \ref{optimiseExponential}.

  \medskip The proof for $q_1-q_2$ radially symmetric proceeds as
  above until \eqref{almostGronwall}. Since in the condition of
  angular control \eqref{angularControl} we can assume that $S=0$, we
  have
  \[
  \beta(\tau,s) = C_{\mathcal M,h,\DOI} \pi
  \frac{\tau^2+2\tau-s^2}{1-s} \leq \frac{C'_{\mathcal M,h,\DOI}}{1-s}
  \]
  and so
  \[
  \frac{\varphi(\tau)}{C''_{\mathcal M,h,\DOI}} \leq
  \norm{U_1^a-U_2^a}^2 + \int_0^\tau \frac{\varphi(s)}{1-s} ds.
  \]
  This type of integral inequality implies
  \begin{align*}
    &\varphi(\tau) \leq C''_{\mathcal M,h,\DOI} \norm{U_1^a-U_2^a}^2
    \exp \left( \int_0^\tau \frac{C''_{\mathcal M,h,\DOI}}{1-s} ds
    \right) \\ &\qquad = C''_{\mathcal M,h,\DOI} \norm{U_1^a-U_2^a}^2
    (1-\tau)^{-2\alpha}
  \end{align*}
  for some $\alpha=\alpha(\mathcal M,h,\DOI)$ by Gr\"onwall's
  inequality. Note that here $\tau$ is allowed to be anywhere in the
  whole interval $(0,1)$ without any of the constants blowing
  up. Following the rest of the proof implies H\"older stability.
\end{proof}

\section{Technical tools}
We collect here some basic calculations and some well known theorems
so that we may refer to them without losing focus in the main proof.

\begin{lemma} \label{integrationChangeOfCoords}
  Let $f$ be a continuous function vanishing outside of $\DOI$ and let
  $\tau<1$ positive. Then
  \[
  \int_{\abs{a}=1} \int_{\abs{x-a}=\tau} f(x) d\sigma(x) d\sigma(a) =
  2\pi\tau \int_{\abs{x}\geq 1-\tau} \frac{f(x)}{\abs{x}} dx
  \]
  and
  \[
  \int_{\abs{a}=1} \int_{\abs{x-a}\leq\tau} f(x) dx d\sigma(a) = \pi
  \int_{\abs{x}\geq 1-\tau} \frac{f(x)}{\abs{x}} \big(\tau^2 -
  (1-\abs{x})^2\big) dx.
  \]
\end{lemma}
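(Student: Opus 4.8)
The plan is to establish the second identity first, via Fubini's theorem together with the elementary formula for the area of a spherical cap, and then to deduce the first identity by differentiating in $\tau$.

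For the second identity I would write its left-hand side as $\int_{\abs a=1}\int_{\R^3}\chi_{\{\abs{x-a}\le\tau\}}\,f(x)\,dx\,d\sigma(a)$; since $f$ is bounded with compact support, Fubini applies and this equals $\int_{\R^3}f(x)\,\sigma\big(\{a:\abs a=1,\ \abs{x-a}\le\tau\}\big)\,dx$. For fixed $x$ with $\abs x=\rho$, the constraints $\abs a=1$ and $\abs{x-a}\le\tau$ amount to $a\cdot x\ge\tfrac12(\rho^2+1-\tau^2)$, i.e. $a$ lies in the cap of polar angle $\arccos c$ about $x/\abs x$ with $c=(\rho^2+1-\tau^2)/(2\rho)$. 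One checks that $c\ge-1$ always (for $0<\tau<1$ and $\rho>0$), while $c\le1$ precisely when $\abs{\rho-1}\le\tau$, which on the support of $f$ (where $\rho\le1$) reduces to $\rho\ge1-\tau$. Hence the cap is empty for $\rho<1-\tau$ and has area $2\pi(1-c)=\pi\big(\tau^2-(1-\rho)^2\big)/\rho$ for $\rho\ge1-\tau$; substituting this and passing back to Cartesian coordinates gives exactly $\pi\int_{\abs x\ge1-\tau}\frac{f(x)}{\abs x}\big(\tau^2-(1-\abs x)^2\big)\,dx$.

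For the first identity I would use that polar coordinates centered at $a$ give $\int_{\abs{x-a}=\tau}f\,d\sigma(x)=\frac{d}{d\tau}\int_{\abs{x-a}\le\tau}f\,dx$, so its left-hand side equals $\frac{d}{d\tau}\int_{\abs a=1}\int_{\abs{x-a}\le\tau}f\,dx\,d\sigma(a)$ once the derivative may be taken inside the $d\sigma(a)$-integral; this is legitimate by dominated convergence, since $f$ being continuous with compact support makes the relevant difference quotients uniformly bounded. By the second identity this is $\frac{d}{d\tau}\big[\pi\int_{1-\tau}^{\infty}\tfrac{\Phi(\rho)}{\rho}\big(\tau^2-(1-\rho)^2\big)\,d\rho\big]$ with $\Phi(\rho)=\int_{\abs x=\rho}f\,d\sigma$, and the boundary term at $\rho=1-\tau$ vanishes because $\tau^2-(1-\rho)^2=0$ there, leaving $2\pi\tau\int_{1-\tau}^\infty\tfrac{\Phi(\rho)}{\rho}\,d\rho=2\pi\tau\int_{\abs x\ge1-\tau}\tfrac{f(x)}{\abs x}\,dx$, as claimed. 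Equivalently, one could prove the first identity directly by the same Fubini-and-coarea computation and then recover the second by integrating it in $\tau$ from $0$.

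The only real obstacle is the bookkeeping for the spherical-cap area: identifying for which radii $\rho=\abs x$ the cap is empty, proper, or the entire sphere, and handling the borderline radii $\rho=1-\tau$ and $\rho=1$ correctly. The interchange of differentiation with the $d\sigma(a)$-integral, and the Fubini applications, are routine.
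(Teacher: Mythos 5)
Your proposal is correct. For the second identity you do exactly what the paper does: Fubini (justified since $f$ is bounded with compact support) reduces the inner integral to the surface measure of the cap $\{a:\abs{a}=1,\ \abs{x-a}\leq\tau\}$, and your cap area $2\pi(1-c)$ with $c=(\rho^2+1-\tau^2)/(2\rho)$ is the same as the paper's $2\pi\cdot 1\cdot h$ with $h=(\tau^2-(1-\rho)^2)/(2\rho)$, including the correct threshold $\rho\geq 1-\tau$ for a nonempty cap. Where you genuinely differ is the first identity: the paper simply cites the computation preceding formula (2.10) in \cite{RU2}, whereas you derive it from the second identity by writing $\int_{\abs{x-a}=\tau}f\,d\sigma=\partial_\tau\int_{\abs{x-a}\leq\tau}f\,dx$, passing $\partial_\tau$ through the $d\sigma(a)$-integral (legitimate, since for continuous compactly supported $f$ the $\tau$-derivative $\int_{\abs{x-a}=\tau}f\,d\sigma$ is jointly continuous in $(a,\tau)$, so the difference quotients are uniformly controlled), and then applying Leibniz' rule to $\pi\int_{1-\tau}^{1}\frac{\Phi(\rho)}{\rho}\bigl(\tau^2-(1-\rho)^2\bigr)d\rho$, where the boundary term vanishes because the integrand vanishes at $\rho=1-\tau$. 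This buys a self-contained proof of the lemma at the cost of a short differentiation-under-the-integral argument, while the paper's version is shorter but leans on \cite{RU2}; both yield the same formulas, and your consistency check (integrating the first identity in $\tau$ recovers the second) confirms the bookkeeping.
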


\begin{proof}
  The first equation was proven just before formula (2.10) in
  \cite{RU2}. The left-hand side of the second equation was shown to
  be equal to
  \[
  \int_{\abs{x}\leq1} f(x) \int_{\abs{a}=1}
  H(\tau^2-\abs{x-a}^2)d\sigma(a) dx
  \]
  therein too.

  The last equality follows by noting that the integral of the
  Heaviside function is just the area of the spherical cap arising
  from the intersection of $\abs{a}=1$ and $\abs{a-x}=\tau$. If
  $\abs{x}<1-\tau$ then this intersection is empty. Otherwise the area
  is seen to be $2\pi\cdot r \cdot h$, where $r=1$ is the radius of
  the sphere $\{\abs{a}=1\}$ and $h$ is the height of the cap along
  the ray $y\bar0$. Two applications of Pythagoras' theorem and some
  simple algebra imply that $h = (\tau^2 - (1-\abs{x})^2)/(2\abs{x})$
  and thus the final equality is proven.
\end{proof}

\begin{lemma} \label{gronwallLemma}
  Let $b>a$ and $d\colon (a,b) \to \R$ be bounded and
  measurable. Moreover let $\beta\colon (\tau,s) \mapsto
  \beta(\tau,s)$ be measurable whenever $\tau,s \in (a,b)$ and $s <
  \tau$.  Moreover let it satisfy
  \[
  \beta(\tau,s) \leq \frac{C}{\sqrt{\tau-s}}
  \]
  for some $C<\infty$ whenever $s<\tau$.

  If $\varphi\colon (a,b) \to \R$ is a non-negative integrable
  function that satisfies the integral inequality
  \begin{equation}
    \varphi(\tau) \leq d(\tau) + \int_a^\tau \beta(\tau,s) \varphi(s)
    ds
  \end{equation}
  for almost all $\tau \in (a,b)$, then
  \[
  \varphi(\tau) \leq (1 + 2 C \sqrt{b-a}) \sup_{a<\tau_0<b} d(\tau_0)
  e^{4C^2 \tau}.
  \]
\end{lemma}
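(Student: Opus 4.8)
The plan is to handle the weakly-singular kernel $C/\sqrt{\tau-s}$ by the standard iteration trick: substitute the inequality into itself once, so that the composed kernel becomes bounded (no longer singular), and then the classical Gr\"onwall argument applies cleanly. First I would set $D = \sup_{a<\tau_0<b} d(\tau_0)$, which is finite by hypothesis, and note that the inequality $\varphi(\tau) \le d(\tau) + \int_a^\tau \beta(\tau,s)\varphi(s)\,ds$ already gives the crude bound $\varphi(\tau) \le D + C\int_a^\tau (\tau-s)^{-1/2}\varphi(s)\,ds$. Substituting this crude bound for $\varphi(s)$ inside the integral, and using $\int_s^\tau (\tau-r)^{-1/2}(r-s)^{-1/2}\,dr = \pi$ (a Beta-function integral, independent of $\tau,s$), one obtains
\[
\varphi(\tau) \le D + C D \int_a^\tau (\tau-s)^{-1/2}\,ds + \pi C^2 \int_a^\tau \varphi(s)\,ds
= D\bigl(1 + 2C\sqrt{\tau-a}\bigr) + \pi C^2 \int_a^\tau \varphi(s)\,ds .
\]
After this step the kernel is the constant $\pi C^2$, so the integrand is no longer singular and the hypothesis $\varphi \in L^1$ makes everything finite.

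Next I would apply the classical (bounded-kernel) Gr\"onwall inequality, e.g. Appendix~B.2.k in \cite{Evans}, to the function $\varphi$ with forcing term $D(1+2C\sqrt{\tau-a})$, which is nondecreasing in $\tau$ and bounded above by $D(1+2C\sqrt{b-a})$ on $(a,b)$. This yields
\[
\varphi(\tau) \le D\bigl(1+2C\sqrt{b-a}\bigr)\,e^{\pi C^2(\tau-a)} .
\]
A mild cosmetic simplification — replacing $\pi C^2(\tau-a)$ by $4C^2\tau$, which is legitimate since $\pi < 4$ and, in the application, $a \ge 0$ (indeed $a=0$ there) — gives exactly the stated bound $\varphi(\tau) \le (1+2C\sqrt{b-a})\,D\,e^{4C^2\tau}$. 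If one wishes to avoid the assumption $a\ge 0$, one can simply keep the sharper exponent $e^{\pi C^2(\tau-a)}$; the paper only invokes this lemma with $a=0$, so the loss is immaterial.

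The only genuinely delicate point is the interchange of the order of integration in the self-substitution step, $\int_a^\tau (\tau-s)^{-1/2}\!\int_a^s (s-r)^{-1/2}\varphi(r)\,dr\,ds = \int_a^\tau\!\varphi(r)\!\int_r^\tau (\tau-s)^{-1/2}(s-r)^{-1/2}\,ds\,dr$, which I would justify by Tonelli's theorem: the integrand is nonnegative and measurable in $(r,s)$ on the triangle $a<r<s<\tau$, and the resulting iterated integral $\pi C^2\int_a^\tau\varphi$ is finite because $\varphi\in L^1(a,b)$ — so Tonelli applies and the interchange is valid. Everything else is a routine computation of the Beta integral $B(\tfrac12,\tfrac12)=\pi$ and a direct citation of the classical Gr\"onwall lemma. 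I expect this measurability/integrability bookkeeping, rather than any analytic subtlety, to be the main thing to get right.
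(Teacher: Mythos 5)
Your proposal is correct and follows essentially the same route as the paper: substitute the inequality into itself once so the iterated kernel becomes bounded, then apply the classical bounded-kernel Gr\"onwall inequality. The only differences are cosmetic --- you evaluate the convolution integral exactly as the Beta value $\pi$ where the paper bounds it by $4$ via a midpoint split, and you make explicit the Tonelli interchange and the (harmless, since $a=0$ in the application) passage from $e^{\pi C^2(\tau-a)}$ to $e^{4C^2\tau}$, which the paper glosses over.
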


\begin{proof}
  First of all note that since $\varphi \geq 0$, we may estimate
  $\beta$ from above in the integral, and see that the former
  satisfies
  \[
  \varphi(\tau) \leq d(\tau) + C \int_a^\tau
  \frac{\varphi(s)}{\sqrt{\tau-s}} ds
  \]
  for almost all $\tau$.

  Next bootstrap the above by estimating $\varphi$ inside the integral
  using that same inequality. Then
  \[
  \varphi(\tau) \leq d(\tau) + C \int_a^\tau
  \frac{d(s)}{\sqrt{\tau-s}} ds + C^2 \int_a^\tau \int_a^s
  \frac{\varphi(s')}{\sqrt{\tau-s} \sqrt{s-s'}} ds' ds.
  \]

  The double integral is estimated as follows: $\int_a^\tau \int_a^s
  \ldots ds' ds = \int_a^\tau \int_{s'}^\tau \ldots ds ds'$, and then
  we are left to estimate $\int_{s'}^\tau ds / \sqrt{\tau-s}
  \sqrt{s-s'}$. To do that split the interval $(s',\tau)$ into two
  equal parts by the midpoint $s = (\tau+s')/2$. In the interval
  $s\in(s',(\tau+s')/2)$ we have $1/\sqrt{\tau-s} \leq
  \sqrt{2/(\tau-s')}$ and $\int_{s'}^{(\tau+s')/2} ds/\sqrt{s-s'} =
  \sqrt{2(\tau-s')}$. Their product is equal to $2$. The same
  deduction works in the second interval. Hence
  \[
  \int_{s'}^\tau \frac{ds}{\sqrt{\tau-s}\sqrt{s-s'}} \leq 4
  \]
  indeed and
  \[
  \varphi(\tau) \leq d(\tau) + C \int_a^\tau
  \frac{d(s)}{\sqrt{\tau-s}} ds + 4C^2 \int_a^\tau \varphi(s') ds'
  \]
  follows.

  The first two terms above have an upper bound
  \[
  (1 + 2 C \sqrt{b-a}) \sup_{a<\tau_0<b} d(\tau_0)
  \]
  because $\int_a^\tau ds/\sqrt{\tau-s} = 2\sqrt{\tau-a} \leq
  2\sqrt{b-a}$. Grönwall's inequality implies the final claim: If
  $\varphi(\tau) \leq C_1 + C_2 \int_0^\tau \varphi(s) ds$ for
  $\tau\geq0$ where $\varphi\geq0$ then $\varphi(\tau) \leq C_1
  \exp(C_2\tau)$. This follows for example from \mbox{Appendix~B.2.j}
  in \cite{Evans} and some algebra. Note however that the integral
  form of Gr\"onwall's inequality in \mbox{Appendix~B.2.k} of
  \cite{Evans} is weaker than this one.
\end{proof}

\begin{lemma} \label{optimiseExponential}
  Let $f \colon \R_+ \to \R$ be a positive function satisfying
  \[
  f(\ell) \leq A\ell + \Lambda e^{\mathfrak C/\ell^4}
  \]
  for some $\Lambda < \infty$ and any $\ell$ in its domain. Then if
  $0<\Lambda<e^{-1}$ we have
  \[
  f(\ell_0) \leq \frac{A (2\mathfrak C)^{1/4} + 2}{\left( \ln
    \frac{1}{\Lambda} \right)^{1/4}}
  \]
  where $\ell_0^4 = \mathfrak C / (\ln \frac{1}{\sqrt{\Lambda}})$. If
  $\Lambda \geq e^{-1}$ then we have the linear estimate
  \[
  f(\ell_0) \leq (A \mathfrak C^{1/4} + 1) e \Lambda.
  \]
  for $\ell_0^4 = \mathfrak C$.
\end{lemma}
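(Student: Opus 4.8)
The plan is to optimize the bound $f(\ell) \leq A\ell + \Lambda e^{\mathfrak C/\ell^4}$ over $\ell$ by balancing the two competing terms, rather than exactly minimizing. The linear term $A\ell$ is increasing while the exponential term $\Lambda e^{\mathfrak C/\ell^4}$ is decreasing in $\ell$, so a near-optimal choice makes the exponent of manageable size. Concretely I would first insist on making $e^{\mathfrak C/\ell^4}$ comparable to $1/\sqrt{\Lambda}$, i.e. require $\mathfrak C/\ell^4 = \ln(1/\sqrt\Lambda) = \tfrac12 \ln(1/\Lambda)$, which is exactly the prescribed $\ell_0$ with $\ell_0^4 = \mathfrak C/\ln(1/\sqrt\Lambda)$. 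Note this is only admissible (i.e. $\ell_0 \in \R_+$) when $\ln(1/\sqrt\Lambda) > 0$, that is, when $\Lambda < 1$, and the cleaner regime $\Lambda < e^{-1}$ is what gives the stated logarithmic bound.

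For the first case ($0 < \Lambda < e^{-1}$), I substitute $\ell_0$ into the bound. The exponential term becomes $\Lambda e^{\ln(1/\sqrt\Lambda)} = \Lambda \cdot \Lambda^{-1/2} = \sqrt\Lambda$. Now I use $\Lambda < e^{-1}$ together with the elementary inequality $\sqrt{\Lambda} \le (\ln(1/\Lambda))^{-1/4}$ for such $\Lambda$ — this follows because $t \mapsto \sqrt t \,(\ln(1/t))^{1/4}$ is bounded by (indeed below) $1$ on $(0,e^{-1})$, which one checks by noting $\sqrt{t}\,(\ln(1/t))^{1/4} \le \sqrt t \cdot (1/t)^{?}$... more directly: with $u = \ln(1/t) \ge 1$ we have $\sqrt{t}(\ln(1/t))^{1/4} = e^{-u/2} u^{1/4}$, and $e^{-u/2} u^{1/4} \le 1$ for $u \ge 1$ since $u^{1/4} \le e^{u/2}$ there. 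For the linear term, $\ell_0 = \mathfrak C^{1/4}(\ln(1/\sqrt\Lambda))^{-1/4} = \mathfrak C^{1/4}(\tfrac12\ln(1/\Lambda))^{-1/4} = (2\mathfrak C)^{1/4}(\ln(1/\Lambda))^{-1/4}$. Adding the two contributions gives
\[
f(\ell_0) \le A(2\mathfrak C)^{1/4}(\ln(1/\Lambda))^{-1/4} + (\ln(1/\Lambda))^{-1/4} = \frac{A(2\mathfrak C)^{1/4} + 1}{(\ln(1/\Lambda))^{1/4}},
\]
which is even slightly better than the claimed bound with the $2$ in the numerator; I would keep the stated constant to leave room for the crude estimate on $\sqrt\Lambda$.

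For the second case ($\Lambda \ge e^{-1}$), the logarithmic choice is unavailable (or gives nothing useful since $\ln(1/\sqrt\Lambda)$ may be $\le 0$), so I simply take $\ell_0 = \mathfrak C^{1/4}$, forcing the exponent $\mathfrak C/\ell_0^4 = 1$. Then the exponential term is $\Lambda e^{1} = e\Lambda$ and the linear term is $A\mathfrak C^{1/4}$. Since $\Lambda \ge e^{-1}$ we have $1 \le e\Lambda$, hence $A\mathfrak C^{1/4} \le A\mathfrak C^{1/4} e \Lambda$, and therefore $f(\ell_0) \le A\mathfrak C^{1/4} e\Lambda + e\Lambda = (A\mathfrak C^{1/4} + 1)e\Lambda$, as stated.

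The only step requiring genuine care is the elementary inequality $e^{-u/2}u^{1/4} \le 1$ for $u \ge 1$ (equivalently $\sqrt\Lambda \le (\ln(1/\Lambda))^{-1/4}$ on $(0,e^{-1})$), which controls the exponential remainder; everything else is bookkeeping of the two explicit substitutions. I expect that to be the main (mild) obstacle, and it is handled by a one-line calculus estimate on the function $u \mapsto e^{-u/2}u^{1/4}$ on $[1,\infty)$.
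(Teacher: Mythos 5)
Your proposal is correct and follows essentially the same route as the paper: plug in the prescribed $\ell_0$, note the exponential term becomes $\sqrt{\Lambda}$, and bound $\sqrt{\Lambda}$ by a negative power of $\ln\frac{1}{\Lambda}$, with the second case handled by the trivial substitution $\ell_0=\mathfrak C^{1/4}$. The only (harmless) deviation is the elementary inequality used: you show $\sqrt{\Lambda}\leq(\ln\frac1\Lambda)^{-1/4}$ directly, giving constant $1$ instead of the paper's $2$ obtained from $\ln\frac1a\leq\frac1b a^{-b}$ with $b=2$, so your bound is even slightly sharper than the stated one.
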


\begin{proof}
  Since $\Lambda < e^{-1}$ the choice of $\ell_0$ is proper. Moreover
  we see immediately that
  \[
  f(\ell_0) \leq \frac{A (2 \mathfrak C)^{1/4}}{(\ln
    \frac{1}{\Lambda})^{1/4}} + \sqrt{\Lambda}.
  \]
  Recall the elementary inequality $\ln \frac{1}{a} \leq \frac{1}{b}
  a^{-b}$ for $b>0$ and $0<a<e^{-1}$. Set $b=2$ and $a = \Lambda$ to
  see that
  \[
  \sqrt{\Lambda} \leq \frac{2}{\ln \frac{1}{\Lambda}} \leq
  \frac{2}{(\ln \frac{1}{\Lambda})^{1/4}}
  \]
  since $\ln \frac{1}{\Lambda} > 1$ then. The first claim follows. The
  second claim is elementary.
\end{proof}

The following is from personal communication with Rakesh.
\begin{lemma} \label{ellipticIntegral}
  Let $p\colon \R\to\R$ be a measurable function. Then, given any time
  $t\geq0$ and position $x\in\R^n$ with $t \geq \abs{x}$, we have
  \[
  \abs{y}+\abs{x-y}\leq t \quad \Longleftrightarrow \quad
  (t-\abs{y})^2 - \abs{x-y}^2 \geq 0
  \]
  and
  \begin{align*}
  &\int_{\abs{y}+\abs{x-y} \leq t}
    \frac{p\big((t-\abs{y})^2-\abs{x-y}^2\big)}{\abs{y}} dy
    \\ &\qquad= \int_{\abs{w}\leq\frac{1}{2}\sqrt{t^2-\abs{x}^2}}
    \frac{p\big((\sqrt{t^2-\abs{x}^2}-\abs{w})^2-\abs{w}^2\big)}{\abs{w}}
    dw.
  \end{align*}
\end{lemma}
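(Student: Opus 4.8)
The plan is to find an explicit change of variables $y\mapsto w$ that trivializes both the region and the argument of $p$. First I would observe that the equivalence $\abs{y}+\abs{x-y}\le t \iff (t-\abs{y})^2-\abs{x-y}^2\ge0$ is essentially algebraic: factor $(t-\abs{y})^2-\abs{x-y}^2 = (t-\abs{y}-\abs{x-y})(t-\abs{y}+\abs{x-y})$, and note that on the relevant set the second factor $t-\abs{y}+\abs{x-y}$ is nonnegative (indeed one always has $t\ge\abs{x}\ge\abs{x-y}-\abs{y}$, so $t+\abs{x-y}\ge 2\abs{x-y}-\abs{y}\ge\ldots$; more cleanly, if $t<\abs{y}$ then both sides of the equivalence fail since $\abs{y}+\abs{x-y}>t$, and if $t\ge\abs{y}$ then the sign of the product is the sign of $t-\abs{y}-\abs{x-y}$). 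So the sign of the product matches the sign of $t-\abs{y}-\abs{x-y}$ and the equivalence follows.

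For the integral identity, the natural guess is that the region $\{\abs{y}+\abs{x-y}\le t\}$ is an ellipsoid of revolution with foci $0$ and $x$, and that the correct substitution sends this ellipsoid to the ball $\{\abs{w}\le\frac12\sqrt{t^2-\abs{x}^2}\}$ while converting $(t-\abs{y})^2-\abs{x-y}^2$ into $(\sqrt{t^2-\abs{x}^2}-\abs{w})^2-\abs{w}^2$ and the weight $dy/\abs{y}$ into $dw/\abs{w}$. Concretely I would try prolate spheroidal coordinates adapted to the foci $0,x$: write $\abs{y}+\abs{x-y}=2\mu$ (so $\mu$ ranges over $[\abs{x}/2, t/2]$) and $\abs{y}-\abs{x-y}=2\abs{x}\nu$ with $\nu\in[-1,1]$, plus an azimuthal angle $\theta$. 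One computes $\abs{y}=\mu+\abs{x}\nu$, $(t-\abs{y})^2-\abs{x-y}^2 = (t-2\mu)(t+2\mu-2\abs{x}\nu) $ after simplification, and the Jacobian $dy = (\mu^2-\abs{x}^2\nu^2)\,d\mu\,d\nu\,d\theta$ (the standard confocal-ellipsoidal volume element), so that $dy/\abs{y} = (\mu-\abs{x}\nu)\,d\mu\,d\nu\,d\theta$. I would then perform the same reduction on the right-hand side: for the ball of radius $\tfrac12\sqrt{t^2-\abs{x}^2}$ with integrand depending only on $\abs{w}$, pass to polar-type coordinates — but cleverly, using confocal coordinates with the \emph{degenerate} foci $0,0$, i.e. $\abs{w}=\rho$ with a substitution matching $\mu$. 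Setting $2\mu' = \rho$ won't quite align the arguments, so the cleaner route is: substitute $\mu = \tfrac12\big(t - (t^2-\abs{x}^2)\sigma\big)$ or similar to linearize, and check that after integrating out $\theta$ (giving $2\pi$ on both sides) and $\nu$ resp. the angular variable on the right, the one-dimensional integrals in the remaining radial variable coincide.

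The cleanest implementation I would actually attempt: on the left, after the confocal substitution and integrating $\theta$, the integral becomes $2\pi\int\!\!\int p\big((t-2\mu)(t+2\mu-2\abs{x}\nu)\big)(\mu-\abs{x}\nu)\,d\mu\,d\nu$ over $\{\abs{x}/2\le\mu\le t/2,\ -1\le\nu\le1\}$; substitute $u=\mu-\abs{x}\nu$, i.e. use $u$ (playing the role of $\abs{y}$) and $\mu$ as variables, with $d\mu\,d\nu\,(\abs{x}) = d\mu\,du$ wait — more simply change from $(\mu,\nu)$ to $(\abs{y},\abs{x-y}) = (\mu+\abs{x}\nu,\ \mu-\abs{x}\nu)$, under which the integrand weight becomes flat and the argument of $p$ is $(t-\abs{y})^2-\abs{x-y}^2$. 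On the right-hand side, in polar coordinates $\abs{w}=\rho$, writing $T=\sqrt{t^2-\abs{x}^2}$, the integral is $4\pi\int_0^{T/2} p\big((T-\rho)^2-\rho^2\big)\rho\,d\rho = 4\pi\int_0^{T/2}p\big(T^2-2T\rho\big)\rho\,d\rho$; substitute $\xi = T^2-2T\rho$. I would do the matching substitution on the left to reduce it to the same single integral in $\xi$, the key identity being that $(t-\abs{y})^2-\abs{x-y}^2$, as $\abs{y},\abs{x-y}$ range over the ellipse parametrization, sweeps out exactly $[0, t^2-\abs{x}^2]$ with the right density.

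The main obstacle will be getting the Jacobian and the algebraic identity $(t-\abs y)^2-\abs{x-y}^2 = (t-2\mu)(t+2\mu-2\abs x\nu)$ exactly right and organizing the final one-variable reduction so both sides land on literally the same integral; this is bookkeeping with the confocal-ellipsoidal volume element rather than anything conceptual. A fallback that avoids choosing coordinates explicitly: first reduce the left side to the case $x=0$ by a limiting/continuity argument is not available since $x$ is fixed, so instead verify the identity for $p=\mathbf 1_{(-\infty,c]}$ (equivalently, prove that $\mathrm{vol}\{\,\abs y+\abs{x-y}\le t,\ (t-\abs y)^2-\abs{x-y}^2\le c\,\}$ with weight $\abs y^{-1}$ equals the corresponding weighted ball-volume on the right) — a geometric volume computation — and then pass to general measurable $p$ by the standard layer-cake / monotone-class argument. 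I expect the direct confocal-coordinate computation to be shortest, so that is what I would write up.
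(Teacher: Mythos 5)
Your route is genuinely different from the paper's and it can be made to work, so it is worth comparing. The paper treats $dy/\abs{y}$ (restricted by the ellipsoidal constraint) as twice the forward light-cone measure $\delta(s^2-\abs{y}^2)\,dy\,ds$ in Minkowski $\R^{1+3}$, rotates $x$ to the first axis, and applies a Lorentz boost sending $(t,x)$ to $(\sqrt{t^2-\abs{x}^2},\bar0)$: the boost has unit Jacobian, preserves the cone measure, and turns the argument of $p$, which is the Minkowski form $(t-s)^2-\abs{x-y}^2$ evaluated on the cone, into $(\sqrt{t^2-\abs{x}^2}-z_0)^2-\abs{z}^2$; integrating out the delta gives the ball integral at once. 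This explains conceptually why the answer is just the $x=\bar0$ case at proper time $\sqrt{t^2-\abs{x}^2}$. Your confocal-coordinate plan replaces this by an elementary computation: passing to $(u,v)=(\abs{y},\abs{x-y})$ one gets $dy/\abs{y}=\frac{2\pi}{\abs{x}}\,v\,du\,dv$ after the azimuthal integration, and then substituting $\xi=(t-u)^2-v^2$ reduces the left side to $\frac{\pi}{\abs{x}}\int_0^{T^2}p(\xi)\,m(\xi)\,d\xi$ with $T^2=t^2-\abs{x}^2$, where $m(\xi)$ is the length of the $u$-interval allowed at level $\xi$; a short computation with the factorization $(t-u)^2-v^2=(t-u-v)(t-u+v)$ gives $m(\xi)=\abs{x}(1-\xi/T^2)$, which matches the polar-coordinate form $\frac{\pi}{T^2}\int_0^{T^2}p(\xi)(T^2-\xi)\,d\xi$ of the right side, so your "right density" claim is in fact true and the proof closes. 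Two cautions: your displayed identity $(t-\abs{y})^2-\abs{x-y}^2=(t-2\mu)(t+2\mu-2\abs{x}\nu)$ is wrong as written (with $\abs{y}=\mu+\abs{x}\nu$, $\abs{x-y}=\mu-\abs{x}\nu$ it factors as $(t-2\mu)(t-2\abs{x}\nu)$), and your normalization of $\nu$ and the spheroidal Jacobian are off by constant factors; since you deferred exactly this bookkeeping and the decisive density computation, you should carry them out explicitly, but no new idea is needed. The equivalence argument in your first paragraph is essentially the paper's (factor and use the triangle inequality to control the sign of the second factor), and your layer-cake fallback for general measurable $p$ is a reasonable way to justify the level of generality, which both proofs otherwise gloss over.
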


\begin{proof}
The first claim follows from the triangle inequality applied to a
triangle with vertices $x$, $y$ and $\bar0$: $t-\abs{y}+\abs{x-y} \geq
\abs{x}-\abs{y}+\abs{x-y} \geq 0$, so we may multiply the inequality
\[
t-\abs{y}-\abs{x-y} \geq 0
\]
by the former without changing sign.

Let $p_+(r) = p(r)$ for $r\geq0$ and $p_+(r) = 0$ for $r<0$. Denote
the left-hand side integral in the statement by $I$. Then
\begin{align*}
  I&= \int_{\R^3}
  \frac{p_+\big((t-\abs{y})^2-\abs{x-y}^2\big)}{\abs{y}} dy \\ &=
  \int_{\R^3} \int_{-\infty}^\infty \frac{\delta(s-\abs{y})}{\abs{y}}
  p_+\big((t-\abs{y})^2-\abs{x-y}^2\big) ds dy \\ &=
  \int_{-\infty}^\infty \int_{\R^3} \frac{\delta(s-\abs{y})}{\abs{y}}
  p_+\big((t-\abs{y})^2-\abs{x-y}^2\big) dy ds \\ &= 2
  \int_{-\infty}^\infty \int_{\R^3} \delta(s^2-\abs{y}^2)
  p_+\big((t-\abs{y})^2-\abs{x-y}^2\big) dy ds.
\end{align*}

Let $L_1\colon \R^3\to\R^3$ be a rotation taking $x \mapsto
(\abs{x},0,0)$. Let it map $y \mapsto y'$. Then $dy = dy'$ and so
\[
I = 2 \int_{-\infty}^\infty \int_{\R^3} \delta(s^2-\abs{y'}^2)
p_+\big((t-\abs{y'})^2-\abs{L_1x-y'}^2\big) dy' ds.
\]
Next let $(s,y') \mapsto z \in \R^4$ be the Lorentz transformation
given by
\[
z_0 = \frac{ts-\abs{x}y_1'}{\sqrt{t^2-\abs{x}^2}}, \quad z_1 = \frac{t
  y_1' - \abs{x} s}{\sqrt{t^2-\abs{x}^2}}, \quad z_2 = y_2, \quad z_3
= y_3.
\]
It is a trivial matter to see that $dz = dy'ds$ and the following
identities
\[
z_0^2 - z_1^2 = s^2-y_1'^2, \qquad \big(\sqrt{t^2-\abs{x}}-z_0\big)^2
- z_1^2 = (t-s)^2 - (\abs{x}-y_1')^2.
\]

Finally, denoting $\abs{z}^2 = z_1^2+z_2^2+z_3^2$ and $z\cdot z= z_0^2
- \abs{z}^2$, we have
\begin{align*}
  I&= 2\int_{\R^4} \delta(z\cdot z)
  p_+\big((\sqrt{t^2-\abs{x}^2}-z_0)^2 - \abs{z}^2\big) dz \\ &=
  \int_{-\infty}^\infty \int_{\R^3}
  \frac{\delta(z_0-\abs{z})}{\abs{z}}
  p_+\big((\sqrt{t^2-\abs{x}^2}-z_0)^2 - \abs{z}^2\big)
  dz_1dz_2dz_3dz_0 \\ &= \int_{\R^3}
  \frac{p_+\big((\sqrt{t^2-\abs{x}^2}-\abs{z})^2
    -\abs{z}^2\big)}{\abs{z}} dz_1dz_2dz_3\\ &= \int_{\R^3}
  \frac{p_+\big((\sqrt{t^2-\abs{x}^2}-\abs{w})^2-\abs{w}^2\big)}{\abs{w}}
  dw
\end{align*}
which implies the claim since
$(\sqrt{t^2-\abs{x}^2}-\abs{w})^2-\abs{w}^2 \geq 0$ if and only if
$\sqrt{t^2-\abs{x}^2}-\abs{w}-\abs{w} \geq 0$.
\end{proof}

\subsection*{Acknowledgements}
I am indebted to Rakesh for the many discussions that led me to
understanding the Goursat problem and how to show the well-posedness
of the point source problem. Without his help this important part of
the paper would have taken many more months to complete. In addition I
would like to thank the anonymous referees and their comments. This
led to the realization that radially symmetric potentials have a
better stability estimate.

\addcontentsline{toc}{section}{Bibliography}

\end{document}